\renewcommand*\env@matrix[1][\arraystretch]{%
  \edef\arraystretch{#1}%
  \hskip -\arraycolsep
  \let\@ifnextchar\new@ifnextchar
  \array{*\c@MaxMatrixCols c}}
\tikzset{vertex/.style={minimum size=2mm,circle,fill=black,draw,inner sep=0pt},
         decoration={markings,mark=at position .5 with {\arrow[black,thick]{stealth}}}}
\pgfplotsset{compat=1.10}
\definecolor{vertFonce}{rgb}{0,0.5,0}
\definecolor{numLignes}{rgb}{0.17,0.57,0.7} 
\definecolor{gris}{rgb}{0.5,0.5,0.5}
\definecolor{grisFonce}{rgb}{0.2,0.2,0.2}
\definecolor{orange}{rgb}{1,0.65,0.31}  
\definecolor{orangeFonce}{rgb}{1,0.4,0}
\definecolor{bleuFonce}{rgb}{0,0,0.4}
\definecolor{rougeFonce}{rgb}{0.3,0,0}
\definecolor{rougeWord}{rgb}{0.5,0,0}
\definecolor{vertClair}{rgb}{0.8,1,0.8}
\definecolor{rougeClair}{rgb}{1,0.5,0.5}    
\newcommand{\R}{ \mathbb{R} }
\newcommand{\N}{ \mathbb{N} }
\newcommand{\C}{ \mathbb{C} }
\renewcommand{\Re}[1]{\operatorname{Re}\{#1\}}
\newcommand{\norm}[1]{\left\lVert #1 \right\rVert}
\newcommand{\e}[1]{\mathrm{e}^{#1}}
\newcommand*\tnlm{t_n^{\ell-}}
\newcommand*\tnlp{t_n^{\ell+}}
\newcommand*\J{{\cal J}}
\renewcommand{\t}{\top} 
\newcommand{\F}{ \mathcal{F} }
\newcommand{\G}{ \mathcal{G} }
\newcommand{\Min}[1]{\raisebox{0.5ex}{\scalebox{0.8}{$\displaystyle \min_{#1}\;$}}}
\newtheorem{remark}{Remark}
\newcommand{\FK}[1]{{\color[rgb]{0,0,0}#1}}
\newcommand{\JS}[1]{{\color[rgb]{0,0,0}#1}}
\newcommand{\SR}[1]{{\color[rgb]{0,0,0}#1}}
\title{Time-parallelization of sequential data assimilation problems}
\author{Sebasti\'an RIFFO\thanks{CEREMADE, CNRS, UMR 7534, Université Paris-Dauphine, PSL Research University, 75016 Paris, France
  (\email{reyesriffo@ceremade.dauphine.fr}).}
\and Felix Kwok\thanks{D\'epartement de math\'ematiques et de statistique, Universit\'e Laval
  (\email{felix.kwok@mat.ulaval.ca}).}
\and Julien Salomon\thanks{INRIA  Paris,  ANGE  Project-Team,  75589  Paris  Cedex  12,  France
   and Sorbonne Universit\'e, CNRS, Laboratoire Jacques-Louis Lions, 75005 Paris, France (\email{julien.salomon@inria.fr})}.
}
\begin{document}

\maketitle

\begin{abstract}
This paper is devoted to the problem of time parallelization of assimilation methods applying on unbounded time domain. 
In this way, we present a general procedure to couple the Luenberger observer with time parallelization algorithm.
Our approach is based on \textit{a posteriori} error estimates of the latter and preserves the rate of the non-parallelized observer.
We then focus on the case where the Parareal algorithm is used as time parallelization algorithm, and derive a bound of the efficiency of our procedure.
A variant devoted to the case a large number of processors is also proposed. We illustrate
the performance of our approach with numerical experiments. 
\end{abstract}

\begin{keywords}
Parareal algorithm, Luenberger Observer, data assimilation, time parallelization.
\end{keywords}

\begin{AMS}
  49M27 , 68W10 , 65K10,  65F08 , 93B40 
\end{AMS}

\section{Introduction}
The assumptions behind a mathematical model not only determine their range of applicability, but also induce an inevitable gap between predictions and reality. In order to narrow this difference, one can sacrifice the simplicity of the model or incorporate real data instead, by following a data assimilation (DA) procedure. Among these approaches, \textit{sequential} methods construct a new system which uses the available observations (that arrive uninterrupted in time) to approximate the true state, whereas \textit{variational} methods follow an optimal control approach using the information collected in a fixed amount of time.

In a deterministic context, sequential methods are often called \textit{observers}. In the pioneering work~\cite{L}, Luenberger introduced a dynamic which imitates the original model, by including an extra term that measures the misfit between the observations and its own predictions. As long as the original model is \textit{observable}, 
this error 
can be driven to zero at exponential rate by properly choosing a certain matrix, 
meaning that the true state is recovered asymptotically.
An alternative is the \textit{Kalman filter}~\cite{K}, which takes into account measurement errors and model inaccuracies represented by Gaussian white noises (both stationary and mutually uncorrelated), in order to compute a state estimate that minimizes the mean square error. Note that extensions to the nonlinear case have been developed, e.g., nonlinear versions of Luenberger observer~\cite{NLL} and  \textit{Extended Kalman filter}~\cite{Notation_DA}. 

Significant difficulties appear when applying these techniques to, e.g., meteorology~\cite{Hoke,Auroux-00} or oceanography problems~\cite{Lyne,verron,lorenc,AUROUX-09-SW}. 
Here,  the number of state variables and the vast amount of observations lead to very costly computations. To overcome this issue, one can consider
 space or time domain decomposition methods, which accelerate the numerical solution of PDEs using parallel computing. We now briefly recall the main approaches and refer to~\cite{Gander-SchwarzMethods,G50} for their detailed description. 

%

 Since the seminal work of Schwarz~\cite{Schwarz}, spatial domain decomposition 
 and corresponding parallelization techniques have seen many qualitative and quantitative improvements, all of which are nowadays well 
 documented~\cite{Gander-SchwarzMethods,MR3450068}. The time direction is significantly more complex to parallelize.  The solution process of evolutionary PDEs is indeed intrinsically sequential, so that time decomposition is, at first glance, not amenable to parallel computing. However, over the last 50 years, many parallel-in-time methods have been developed~\cite{G50}. 
 The  origins of these approaches can be traced back to Nievergelt~\cite{Nievergelt}, who first introduced the concept that has later been dubbed~\textit{Multiple shooting}: decompose the time interval into disjoint subintervals and solve simultaneously a family of initial-value problems, breaking the intrinsic sequential nature of the time-dependent differential equation. Among these methods, one of the most recent ones is the Parareal algorithm~\cite{LMT}. 

Different procedures have been developed to couple space or time parallel methods with data assimilation problems. Trémolet and Le Dimet~\cite{4DVar-par} were among the first to address the parallelization of \textit{Variational} data assimilation problems in meteorology. In a continuous setting, they proposed a domain decomposition approach combined with an adjoint method, by assigning to each subdomain a local version of a continuous minimization problem, with an extra term on the local cost functional to enforce the continuity of the state between adjacent domains. 
Following this approach, Rao and Sandu~\cite{RS} apply a quasi-Newton solver to the \FK{4D-Var problem~\cite{4DVAR}}, and time-parallelize the computation of the gradient. A more sophisticated approach is proposed by D'Amore and Cacciapuoti~\cite{Amore-Cacciapuoti}, who combine the Parareal algorithm with the Multiplicative Parallel Schwarz method (MPS) to solve 4D-Var. Note finally that time paralellization has also been combined \FK{with} optimization solvers in the neighbor field of control~\cite{MST,GKS}. 

Parallel-in-time algorithms could be quite useful when dealing with \FK{long} 
time intervals, as is the case of \textit{sequential} DA methods, where information can arrive uninterrupted. 
However, coupling these two approaches is not straightforward since the former generally applies on bounded time intervals. In this paper, we propose a first general method to \FK{time-parallelize} an unbounded assimilation method, namely, the Luenberger observer. Our approach is based on a sequential treatment of time windows, each windows being processed in parallel. 

Our paper is organized as follows: we start in Section \ref{sec:diamond} by proposing a procedure to couple sequential data assimilation methods with parallel-in-time algorithms, \FK{which involves} 
splitting the unbounded time interval into subintervals of the same length (\textit{windows}) and then apply\FK{ing}, following a sequential order, the time-parallel solver on each \FK{window}. By considering the Luenberger observer as \FK{an} assimilation method, we provide an accuracy criterion that preserves its exponential rate of convergence, which yields an \textit{a posteriori} estimate of the accuracy of the solver. In order to go further, in Section \ref{sec:parallel} we \FK{use} 
the Parareal algorithm as \FK{a} parallel-in-time solver. This allows us to design an alternative algorithm that provides an \textit{a priori} estimate of the number of iterations required on each window, which also enables us to investigate the theoretical efficiency of the entire procedure. These results are based on a new convergence estimate that we derive for Parareal when the coarse solver is a contraction mapping. Finally, we present some numerical results in Section \ref{sec:numerics}.

Throughout this paper, $\norm{\cdot}$ represents the induced 2-norm of a matrix.


\section{The Luenberger observer}\label{sec:Luenberger}
Control theory usually requires a complete knowledge of the state vector. However, due to certain limitations related to a problem, for instance the number of available measurements, one can often have access only to partial information. An example which fits into this setting is given by 
\begin{equation}\left\{
\begin{aligned}
	\dot{x}(t) =&  Ax(t) +Bu(t),\quad x(0) =  x_0,\\
	y(t) =&  Cx(t),
\end{aligned}\right. \label{det-assim}
\end{equation}
where $A\in\mathcal{M}_{m\times m}(\R)$, $B\in\mathcal{M}_{m\times p}(\R)$ and $C\in\mathcal{M}_{q\times m}(\R)$ are assumed to be known. Here $m, p, q \in \N^{*}$, with $p,q < m$ (and generally $q\ll m$ in the applications);  $x\in\R^m$ is the state vector, $y\in\R^q$ is the measured output, $u\in\R^p$ 
and $t\in(0,+\infty)$. The initial condition $x(0) = x_0$ is unknown. 

In such a situation, one can try to compute an estimate $\hat{x}(t)$ of $x(t)$, knowing only the input $u(t)$ and output $y(t)$. To tackle this problem, Luenberger \cite{L} proposed to consider the dynamical system
\begin{equation}\left\{
\begin{aligned}
	\dot{\hat{x}}(t) =&  A\hat{x}(t) +Bu(t) +L\left[y(t)-\hat{y}(t)\right], \quad \hat{x}(0) = \hat{x}_0 \\
	\hat{y}(t) =&  C\hat{x}(t)
\end{aligned}\right. \label{Luenberger}
\end{equation}
with $L\in\mathcal{M}_{m\times q}(\R)$ the \textit{observer gain} and $\hat{x}_0$ an arbitrary initial condition. Equations \eqref{Luenberger} are known as the \textit{Luenberger observer} or the \textit{Identity observer}.

The matrix $L$ needs to be specified, but let us already note that it plays an important role in the estimation error $\varepsilon(t):=x(t)-\hat{x}(t)$. Indeed, substracting \eqref{det-assim} and \eqref{Luenberger}, and then solving the resultant ODE, one obtains 
\begin{equation}
	\varepsilon(t) = \e{(A-LC)t}\left(x(0)-\hat{x}(0)\right). \label{error}
\end{equation} 
This last equality shows that the error will decay to zero if the eigenvalues of $A-LC$ lie in the open left half-plane $\{z\in\C: \, \Re{z} < 0\}$, where $\Re{z}$ denotes the real part of $z$. This property is related to \FK{the} observability condition. More precisely, recall that \eqref{det-assim} is \textit{observable} if the rank of the matrix 
\[
\mathcal{C} \vcentcolon= \begin{bmatrix} C \\ CA \\ \vdots \\ CA^{m-1}\end{bmatrix}
\]
is $m$. 
We then have the following result, often calle\FK{d} the \textit{Identity observer Theorem }\cite[p.303]{L}\FK{:}
\begin{theorem}\label{observability}
Given a completely observable system \eqref{det-assim}, an identity observer of the form \eqref{Luenberger} exists. Moreover, the eigenvalues of $A-LC$ can be selected arbitrarily.
\end{theorem}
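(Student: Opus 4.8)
The plan is to reduce the statement to the classical pole-placement (eigenvalue assignment) theorem for controllable pairs, exploiting the duality between observability and controllability. The starting observation is that, for any $L$, the matrix $A-LC$ and its transpose $(A-LC)^\top = A^\top - C^\top L^\top$ share the same characteristic polynomial, hence the same eigenvalues. Writing $\hat A := A^\top$, $\hat B := C^\top$ and $F := L^\top$, the problem of choosing $L$ so that $A-LC$ has a prescribed spectrum is therefore equivalent to choosing a feedback matrix $F$ so that $\hat A - \hat B F$ has that spectrum.

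Next I would translate the observability hypothesis into a controllability statement for the pair $(\hat A,\hat B)$. Transposing the blocks of the observability matrix $\mathcal{C}$ shows that $\operatorname{rank}\mathcal{C} = \operatorname{rank}[\,\hat B \mid \hat A\hat B \mid \cdots \mid \hat A^{m-1}\hat B\,]$, so that observability of $(A,C)$ is exactly controllability of $(\hat A,\hat B)$. It then suffices to invoke the pole-placement theorem: if $(\hat A,\hat B)$ is controllable, then for any monic degree-$m$ polynomial with real coefficients (equivalently, any multiset of $m$ complex numbers closed under conjugation) there exists a real matrix $F$ with $\det(\lambda I-(\hat A-\hat B F))$ equal to that polynomial. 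Taking $L = F^\top$ and choosing the target spectrum in the open left half-plane $\{z\in\C:\Re{z}<0\}$ yields both the existence of an identity observer and the arbitrary assignability of the eigenvalues of $A-LC$.

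For the pole-placement theorem itself I would distinguish two cases. When $q=1$ (single output, so $\hat B$ is a single column), controllability lets me pass to the controllable canonical (companion) form by a similarity transform; in those coordinates the entries of $F$ appear directly as the coefficients of the characteristic polynomial of $\hat A-\hat B F$, so any target polynomial is realized by reading off its coefficients, and transforming back recovers $F$ in the original coordinates. The existence claim is then immediate, since one simply selects a polynomial with all roots in the left half-plane.

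The main obstacle is the multi-output case $q>1$, where no single companion form is available. Here I would reduce to the scalar case via Heymann's lemma: controllability of $(\hat A,\hat B)$ guarantees a vector $v$ and a matrix $G$ such that $(\hat A-\hat B G,\ \hat B v)$ is controllable as a single-input pair. Applying the $q=1$ construction to this single-input pair produces a row vector $f^\top$ placing the poles, whence $F = G + v f^\top$ solves the original problem, because $(\hat A-\hat B G)-\hat B v f^\top = \hat A-\hat B(G+v f^\top)$. Assembling these pieces gives the result; alternatively, one may simply invoke the classical theorem, as the authors do.
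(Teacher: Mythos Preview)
Your argument is correct and follows the standard duality route: transpose to convert the observability of $(A,C)$ into controllability of $(A^\top,C^\top)$, invoke pole placement for controllable pairs (companion form in the single-input case, Heymann's lemma to reduce the multi-input case), and transpose back to recover $L$. There is nothing to object to mathematically.

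However, you should be aware that the paper does not actually prove this theorem. It is stated as the classical \emph{Identity observer Theorem} and attributed directly to Luenberger~\cite[p.~303]{L}; no proof is given in the paper itself. The discussion that follows the statement is not a proof but a brief survey of \emph{computational} procedures for finding $L$ once the desired spectrum is fixed: matching coefficients in the single-output case, the Bass--Gura method via the companion form, and Ackermann's formula $L=\phi(A)\mathcal{C}^{-1}(0\cdots 0\ 1)^\top$. These are constructive recipes rather than an existence argument, and the multi-output case is simply deferred to a reference.

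So your proposal goes beyond what the paper does: you supply an actual proof where the paper only cites one. The companion-form step you use in the $q=1$ case is essentially what underlies the Bass--Gura method the paper mentions, so the two are consistent in spirit; your treatment of $q>1$ via Heymann's lemma is a genuine addition relative to the paper's text.
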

This theorem shows that given a set $\{\mu_i\}_{i=1,\dots, m}$, 
there exists $L$ satisfying
\begin{equation}
	\det \left(sI -(A-LC)\right) = \phi(s), \label{detL}
\end{equation}
where $\phi(s) =  (s-\mu_1)\cdots (s-\mu_m)$, i.e., $\{\mu_i\}_{i=1,\dots m}$ are the eigenvalues of $A-LC$.

Note that for a single-input single-output system, i.e. $p=q=1$, one could determine a unique $L\in \R^m$ by equating the $m$ coefficients of both polynomials in \eqref{detL}. However, this approach leads to highly nonlinear equations that are in practice not tractable. Another way to proceed is the Bass-Gura method \cite{BG}, which requires the first companion form of $A$ and the coefficients of $\phi(s)$. An even more direct method is the Ackermann's formula \cite{AckSISO} for an observable system, given by 
\begin{equation*}
	L = \phi(A) \mathcal{C}^{-1} (0 \cdots 0\;\; 1)^{\t},
\end{equation*}
which follows from the Cayley-Hamilton Theorem. For its multi-input multi-output extension, see \cite{AckMIMO}.

Due to Theorem \ref{observability},  we obtain
\begin{proposition} \label{stability}
Suppose that \eqref{det-assim} is observable and that the eigenvalues of $A-LC$ are negative and simple. Then, we have 
\begin{equation*} 
	\norm{\e{(A-LC)t}} \leq \gamma \e{-\mu t},
\end{equation*}
with $\mu \vcentcolon = \Min{\nu \in \sigma(A-LC)}{|\nu|}$ and $\gamma \vcentcolon = \textrm{cond}(V) = \norm{V^{-1}}\norm{V}$, where $V$ is the matrix whose rows are the eigenvectors of $A-LC$.
\end{proposition}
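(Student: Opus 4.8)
The plan is to exploit the diagonalizability of $A-LC$ and reduce the estimate to an elementary bound on the exponential of a diagonal matrix. Set $M \vcentcolon= A-LC$. Since the $m$ eigenvalues of $M$ are simple, they are in particular pairwise distinct, so the associated eigenvectors are linearly independent and $M$ is diagonalizable. Taking the rows of $V$ to be the (left) eigenvectors of $M$, one has $VM = DV$ with $D = \operatorname{diag}(\nu_1,\dots,\nu_m)$ and $\nu_i \in \sigma(M)$, hence $M = V^{-1}DV$ and therefore
\begin{equation*}
\e{Mt} = V^{-1}\e{Dt}V.
\end{equation*}

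I would then apply submultiplicativity of the induced $2$-norm to obtain
\begin{equation*}
\norm{\e{Mt}} \leq \norm{V^{-1}}\,\norm{\e{Dt}}\,\norm{V} = \gamma\,\norm{\e{Dt}},
\end{equation*}
with $\gamma = \operatorname{cond}(V)$. As $D$ is diagonal, $\e{Dt} = \operatorname{diag}(\e{\nu_1 t},\dots,\e{\nu_m t})$, whose $2$-norm is $\max_i|\e{\nu_i t}| = \max_i \e{\nu_i t}$, the eigenvalues being real. For $t>0$ this maximum is attained at the eigenvalue closest to the imaginary axis, i.e. the one of smallest modulus; by negativity that eigenvalue equals $-\mu$, so $\norm{\e{Dt}} = \e{-\mu t}$. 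Combining the two displays yields the announced bound.

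The computation itself is routine, so the points deserving attention are purely bookkeeping. First, one must check that the chosen arrangement of eigenvectors produces exactly $\gamma = \operatorname{cond}(V)$: if the eigenvectors are instead collected as the columns of $P = V^{\top}$, the decomposition reads $M = PDP^{-1}$ and submultiplicativity gives $\norm{\e{Mt}} \le \operatorname{cond}(P)\,\norm{\e{Dt}}$; but the induced $2$-norm is invariant under transposition, whence $\operatorname{cond}(P)=\operatorname{cond}(V)$ and the two conventions agree. Second, the sign bookkeeping in $\max_i \e{\nu_i t} = \e{-\mu t}$ relies on all eigenvalues being real and negative, so that $|\nu_i| = -\nu_i$ and $\max_i \nu_i = -\mu$. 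I expect no genuine obstacle: the hypotheses of simplicity and negativity are used exactly to guarantee diagonalizability and to identify the decay rate, respectively.
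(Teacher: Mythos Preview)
Your argument is correct and is exactly the standard diagonalization bound one expects here; the paper itself does not spell out a proof but simply states the proposition as a direct consequence, so your write-up is in the same spirit and supplies the routine details (diagonalize via $V$, use submultiplicativity, identify $\norm{\e{Dt}}=\e{-\mu t}$). One cosmetic remark: the bound also holds at $t=0$ since $\operatorname{cond}(V)\ge 1$, and the hypothesis of observability is not actually used in the inequality itself---it is present in the statement only to guarantee, via Theorem~\ref{observability}, that an $L$ with the prescribed spectrum exists.
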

Combining the latter with~\eqref{error}, we obtain in particular
\begin{equation}
	\norm{\varepsilon(t)} \leq \gamma \norm{x(0) -\hat{x}(0)} \e{-\mu t}. \label{luenberger-decay}
\end{equation}
In practice, the term $\norm{x(0) -\hat{x}(0)}$ is unknown, whereas $\mu$ is chosen 
 by the procedure that designs $L$, hence known explicitly. Consequently, the previous formula provides in practice only a rate of convergence for the Luenberger observer.


\section{Time-parallelization setting}\label{sec:diamond}

In what follows, we propose to extend \SR{the combination between data assimilation algorithms and parallelization procedures }
to unbounded time intervals\SR{, by considering the Luenberger observer. In this case, }
we will manage to preserve the exponential rate of convergence of the problem, by an approach that we call the \textit{Diamond strategy}. 

Let us \FK{briefly describe} our approach. We proceed by partitioning $[0,+\infty)$ into intervals of the same length that we call \textit{windows}. Following a sequential order, we apply a parallel-in-time solver in each of them, up to some level of accuracy related to a specific accuracy criterion. We then develop an analysis which decomposes the estimation error into two terms, corresponding respectively to the Luenberger observer and the parallelization error. Based on that, we propose a suitable bound on the latter, so that our criterion preserves Luenberger’s rate of convergence.

\subsection{Framework}\label{parallel-setting} 
In order to accelerate the assimilation and take advantage of a time-parallelization procedure, we propose to divide the time interval $[0,+\infty)$ into windows of a given length $T>0$ denoted by 
\begin{equation*}
	W_{\ell} \vcentcolon = (T_{\ell-1},T_{\ell}),\quad \ell \geq 1,
\end{equation*}
where $T_{\ell} = \ell \cdot T$ with $\ell \in\N$. Then, we solve  \eqref{Luenberger} on each window, in a sequential order, using a time-parallel algorithm. Let us describe how this last method applies.

 Given $\ell\geq 1$ and a fixed window $W_\ell$, we decompose the latter into $N$ subintervals of length $\Delta T$
\begin{equation*}
	W_{\ell} = \bigcup_{n=0}^{N-1} (t_n^{\ell},t_{n+1}^{\ell}),
\end{equation*}
with $t_n^{\ell} = T_{\ell-1} +n\Delta T$ and $N\Delta T=T$, \SR{as shown in Figure \ref{fig:notation}.}

\begin{center}
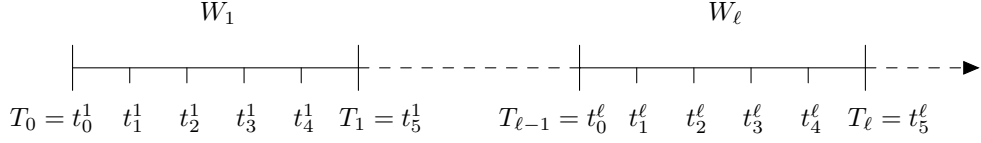
\begin{figure}[h]
\begin{tikzpicture}[line cap=round,line join=round,>=triangle 45,x=.95cm,y=1.0cm]
\begin{scope}[scale=.2,xshift=-50 cm]
\draw (2,18)-- (22,18);
\draw [dash pattern=on 4pt off 4pt] (22,18)-- (48-10,18);
\draw (2,19.5)-- (2,16.5);
\draw (22,19.5)-- (22,16.5);
\draw (6,18)-- (6,17);
\draw (10,18)-- (10,17);
\draw (14,18)-- (14,17);
\draw (18,18)-- (18,17);
\draw[color=black] (12.69-.5,22.62-1) node {$W_1$};
\draw[color=black] (2.55-.5-.5-1,14.63) node {$T_0=t^1_0$};
\draw[color=black] (6.76-.5,14.63) node {$t^1_1$};
\draw[color=black] (10.73-.5,14.63) node {$t^1_2$};
\draw[color=black] (14.7-.5,14.63) node {$t^1_3$};
\draw[color=black] (18.71-.5,14.63) node {$t^1_4$};
\draw[color=black] (22.58+1,14.63) node {$T_1=t^1_5$};

\begin{scope}[xshift=-10 cm]
\draw (48,18)-- (68,18);
\draw (48,19.5)-- (48,16.5);
\draw (68,19.5)-- (68,16.5);
\draw [->,dash pattern=on 4pt off 4pt] (68,18)-- (76,18);
\draw[color=black] (58.72-.5,22.62-1) node {$W_\ell$};
\draw[color=black] (48.68-.5-.5-1.5,14.63) node {$T_{\ell-1}=t^\ell_0$};
\draw[color=black] (52.69-.5,14.63) node {$t^\ell_1$};
\draw[color=black] (56.71-.5,14.63) node {$t^\ell_2$};
\draw[color=black] (60.68-.5,14.63) node {$t^\ell_3$};
\draw[color=black] (64.74-.5,14.63) node {$t^\ell_4$};
\draw[color=black] (68.66+1,14.63) node {$T_{\ell}=t^\ell_5$};
\draw (52,18)-- (52,17);
\draw (56,18)-- (56,17);
\draw (60,18)-- (60,17);
\draw (64,18)-- (64,17);
\end{scope}

\end{scope}
\end{tikzpicture}
\caption{Notation associated with the parallelization setting in the case $N=5$.}\label{fig:notation}
\end{figure}
\end{center}

Since time moves forward, parallelizing in this direction requires  on each subinterval the introduction of initial conditions $\hat{X}_{\ell,n}^{h}$. 
 These are assumed to be obtained from the time-parallelization procedure under consideration. In this setting, the parameter $h$ is used in the notation to account  for the accuracy of the procedure. 
In this way, we introduce a parallel version of~\eqref{Luenberger} in each subinterval $(t_n^{\ell},t_{n+1}^{\ell})$, namely
\begin{equation}\left\{
\begin{aligned}
	\dot{\hat{x}}_{\parallel}(t) =& A\hat{x}_{\parallel}(t)+Bu(t) + L\left[y(t)-C\hat{x}_{\parallel}(t)\right] \\
	\hat{x}_{\parallel}(\tnlp) =&  \hat{X}_{\ell,n}^{h}, 
\end{aligned}\right. \label{parallel-continuous}
\end{equation}
where $\hat{x}_{\parallel}(t)$ denotes the approximation of $\hat{x}(t)$ obtained by the parallel-in-time solver, \SR{see Figure \ref{fig:diamond-strategy}.} When $n = 0$, we consider as initial conditions $\hat{X}_{0,0}^h = \hat{x}_0$ and $\hat{X}_{\ell,0}^h = \hat{x}_{\parallel}(T_{\ell}^{-})$ for $\ell > 0$, meaning that $\hat{x}_{\parallel}$ is continuous at $T_\ell$ and that windows are treated sequentially, as announced above. 


\begin{center}
\begin{figure}[h]
\begin{tikzpicture}
	\draw [->] (0,0) -- (10.5,0) node [right] {$t$};
	\draw [->] (0,0) -- (0,3.5); 
	
	\draw [orangeFonce,domain=0:10.5,samples=100] plot (\x, {3 +0.25*(sin(3*\x r)});
	\node[left] at (0,3) {$x_0$};
	
	\draw [blue,domain=0:10.5,samples=100] plot (\x, {3*(1-exp(-(2/3)*\x)) +0.25*(sin(3*\x r)});
	\node[left] at (0,0) {$\hat{x}_0$};
	
	\draw[thick] (2.5,-0.2)-- (2.5,0.2);
	\draw[thick] (5,-0.2)-- (5,0.2);
	\draw[thick] (7.5,-0.2)-- (7.5,0.2);
	\draw[thick] (10,-0.2)-- (10,0.2);

	\draw[gray] (0.5,-0.125)-- (0.5,0.125);
	\draw[gray] (1,-0.125)-- (1,0.125);
	\draw[gray] (1.5,-0.125)-- (1.5,0.125);
	\draw[gray] (2,-0.125)-- (2,0.125);
	\draw[gray] (3,-0.125)-- (3,0.125);
	\draw[gray] (3.5,-0.125)-- (3.5,0.125);
	\draw[gray] (4,-0.125)-- (4,0.125);
	\draw[gray] (4.5,-0.125)-- (4.5,0.125);
	\draw[gray] (5.5,-0.125)-- (5.5,0.125);
	\draw[gray] (6,-0.125)-- (6,0.125);
	\draw[gray] (6.5,-0.125)-- (6.5,0.125);
	\draw[gray] (7,-0.125)-- (7,0.125);
	\draw[gray] (8,-0.125)-- (8,0.125);
	\draw[gray] (8.5,-0.125)-- (8.5,0.125);
	\draw[gray] (9,-0.125)-- (9,0.125);
	\draw[gray] (9.5,-0.125)-- (9.5,0.125);		

	\draw[<->,>=latex] (5,0.75)-- (7.5,0.75);
	\draw (6.25,0.75)  node[above] {$T$};
	\draw[<->,>=latex] (6.5,0.25)-- (7,0.25);
	\draw (6.75,0.25)  node[above] {\tiny{$\Delta T$}};	

	\draw (2.5,-0.1)  node[below] {\small{$T_{\ell-1}$}};
	\draw (5,-0.1)  node[below] {\small{$T_{\ell}$}};
	\draw (7.5,-0.1)  node[below] {\small{$T_{\ell+1}$}};
		
	\draw [vertFonce,domain=0:0.5] plot (\x+2.5, {2 +1.5*(1-exp(-0.5*\x)) +0.25*(sin(3*\x r)});
	\draw [vertFonce,domain=0.5:1] plot (\x+2.5, {1.5 +1.5*(1.1-exp(-0.5*\x)) +0.25*(sin(3*\x r)});
	\draw [vertFonce,domain=1:1.5] plot (\x+2.5, {0.875 +1.5*(1.425-exp(-0.5*\x)) +0.25*(sin(3*\x r)});
	\draw [vertFonce,domain=1.5:2] plot (\x+2.5, {0.19 +1.5*(1.6-exp(-0.5*\x)) +0.25*(sin(3*\x r)});
	\draw [vertFonce,domain=2:2.5] plot (\x+2.5, {0.15 +1.5*(1.4-exp(-0.5*\x)) +0.25*(sin(3*\x r)});

	\draw [fill, magenta] (2.5,2) circle [radius=0.05];
	\draw [fill, magenta] (3,2.25) circle [radius=0.05];
	\draw [fill, magenta] (3.5,2.125) circle [radius=0.05];
	\draw [fill, magenta] (4,1.625) circle [radius=0.05];
	\draw [fill, magenta] (4.5,1.65) circle [radius=0.05];
	
	\draw[dotted] (4,0) -- (4,3.5);
	\draw (4,0) node[below] {\small{$t_n^{\ell}$}};	
	\draw (4,1.625) node[left] {$\scriptstyle{\color{magenta} \hat{X}_{\ell,n}^h}$};
	\draw (3.9,2.2) node[right] {$\scriptstyle{\color{vertFonce} \hat{x}_{\parallel}({t_n^{\ell}}^{\scalebox{0.75}[1.0]{-}}) }$};

%
	
\end{tikzpicture}
\caption{A time-parallelized observer}
\label{fig:diamond-strategy}
\end{figure}
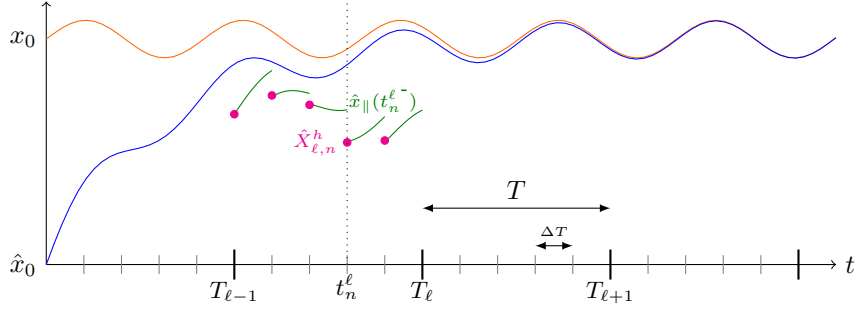
\end{center}

\subsection{The Diamond strategy} \label{diamond} 
Let $\ell\geq 1$. Imposing initial conditions induces discontinuities at the interfaces $t_n^{\ell},\ n=1,\ldots,N-1$ of the subintervals. These \textit{jumps} are 
 defined by $J_{\ell,n}^{h} := \hat{X}_{\ell,n}^h -\hat{x}_{\parallel}(\tnlm)$. 
 The success of the parallel method relies on their decay to zero \FK{as} 
 $\ell$ increases. To analyze this \FK{decay} 
 we clarify the relation between the solution of \eqref{det-assim} and the parallelized observer \eqref{parallel-continuous}.
 
\begin{lemma}\label{parallel-error}
Let $\ell>0$. Under the assumptions of Proposition \ref{stability}, we have
\begin{equation}
\norm{\varepsilon_{\parallel}(T_\ell)}
	\leq \gamma \left( \norm{ x(0)-\hat{x}(0)} +\sum_{j=1}^{\ell} \e{\mu j T}\| \J^h_j\|\right)\e{-\mu\ell T} \label{ineq:parallel-error}
\end{equation}
where $\varepsilon_{\parallel}(t):= x(t)-\hat{x}_{\parallel}(t)$ is the error associated with the approximation  \eqref{parallel-continuous} and
\begin{equation}\label{capJ}
\J^h_\ell:=-\sum_{n=1}^{N-1} \e{(A-LC)(N-n)\Delta T}J_{\ell,n}^{h}.
\end{equation}
\end{lemma}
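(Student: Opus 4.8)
The plan is to show that the interface jumps $J_{\ell,n}^{h}$ enter $\varepsilon_{\parallel}$ additively, so that $\varepsilon_{\parallel}(T_\ell)$ obeys a simple one-step recursion across windows whose forcing term is exactly $\J^h_\ell$. Solving this recursion and applying Proposition \ref{stability} then gives \eqref{ineq:parallel-error} directly.

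First I would establish the dynamics of $\varepsilon_{\parallel}$ on the interior of a subinterval. Subtracting \eqref{parallel-continuous} from \eqref{det-assim} and using $y=Cx$, every term involving $u$ cancels and one finds $\dot{\varepsilon}_{\parallel} = (A-LC)\varepsilon_{\parallel}$ on each $(t_n^\ell,t_{n+1}^\ell)$, exactly as in the derivation of \eqref{error}; hence $\varepsilon_{\parallel}$ propagates across a subinterval by the matrix exponential $\e{(A-LC)\Delta T}$. At an interior interface $t_n^\ell$ the reset $\hat{x}_{\parallel}(\tnlp)=\hat{X}_{\ell,n}^{h}$ introduces a jump; since $x$ is continuous, the definition $J_{\ell,n}^{h}=\hat{X}_{\ell,n}^{h}-\hat{x}_{\parallel}(\tnlm)$ yields the clean relation $\varepsilon_{\parallel}(\tnlp)=\varepsilon_{\parallel}(\tnlm)-J_{\ell,n}^{h}$. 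The continuity convention linking consecutive windows (the terminal value of one window serving as the initial value of the next) ensures there is no jump at the window endpoints, consistent with the range $n=1,\dots,N-1$ in \eqref{capJ}.

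Next I would propagate the error across a whole window. Writing $M:=A-LC$ and alternating the free flight $\e{M\Delta T}$ with the jump subtraction, a short induction on $k$ gives $\varepsilon_{\parallel}(t_k^{\ell+})=\e{kM\Delta T}\varepsilon_{\parallel}(T_{\ell-1})-\sum_{n=1}^{k}\e{(k-n)M\Delta T}J_{\ell,n}^{h}$. Flowing from $t_{N-1}^{\ell+}$ to $t_N^\ell=T_\ell$ (where no further jump occurs) and using $N\Delta T=T$, the accumulated jumps reassemble precisely into \eqref{capJ}, producing the recursion $\varepsilon_{\parallel}(T_\ell)=\e{MT}\varepsilon_{\parallel}(T_{\ell-1})+\J^h_\ell$, with $\varepsilon_{\parallel}(T_0)=x(0)-\hat{x}(0)$.

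Finally I would unroll this linear recursion to obtain $\varepsilon_{\parallel}(T_\ell)=\e{\ell MT}(x(0)-\hat{x}(0))+\sum_{j=1}^{\ell}\e{(\ell-j)MT}\J^h_j$, take the $2$-norm via the triangle inequality, and bound each matrix exponential by Proposition \ref{stability}, $\norm{\e{Mt}}\le\gamma\,\e{-\mu t}$. Writing $\e{-\mu(\ell-j)T}=\e{-\mu\ell T}\e{\mu j T}$ and factoring out $\gamma\,\e{-\mu\ell T}$ reproduces \eqref{ineq:parallel-error} verbatim. The only genuine bookkeeping, and thus the main obstacle, is the induction in the third step: one must track how a jump injected at interface $t_n^\ell$ is transported by the remaining $N-n$ free flights to the window's right endpoint, so that its weight becomes exactly $\e{M(N-n)\Delta T}$ as in \eqref{capJ}; everything else follows from linearity and the contraction estimate of Proposition \ref{stability}.
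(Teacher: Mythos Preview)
Your proposal is correct and follows essentially the same argument as the paper: both derive the one-step window recursion $\varepsilon_{\parallel}(T_\ell)=\e{(A-LC)T}\varepsilon_{\parallel}(T_{\ell-1})+\J^h_\ell$ by alternating the free flow $\e{(A-LC)\Delta T}$ with the interface corrections, unroll it, and apply Proposition~\ref{stability}. The only cosmetic difference is that the paper tracks $\varepsilon_{\parallel}(t_n^{\ell-})$ in its induction whereas you track $\varepsilon_{\parallel}(t_n^{\ell+})$, which is immaterial.
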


\begin{proof}
Let $\ell \geq 1$. We have
\begin{align*}
\varepsilon_{\parallel}(\tnlm)
    =&  x(t^\ell_n)-\hat{x}_{\parallel}(\tnlm)
    = \e{(A-LC)\Delta T} \e{-(A-LC)\Delta T}\left(x(t^\ell_n)-\hat{x}_{\parallel}(\tnlm)\right)\\
    =&  \e{(A-LC)\Delta T}(x(t^\ell_{n-1})-\hat{x}_{\parallel}(t^{\ell +}_{n-1}))\\
    =&  \e{(A-LC)\Delta T}(x(t^\ell_{n-1})-\hat{X}^h_{\ell,n-1}))\\
    =& \e{(A-LC)\Delta T}(x(t^\ell_{n-1})-\hat{x}_{\parallel}(t^{\ell -}_{n-1})-J_{\ell,n-1}^{h})\\
    =& \e{(A-LC)\Delta T}(\varepsilon_{\parallel}(t^{\ell -}_{n-1})-J_{\ell,n-1}^{h}  ),
\end{align*}
so that
\begin{equation*}
\varepsilon_{\parallel}(T_{\ell}) =\varepsilon_{\parallel}(t^{\ell}_{N})
= \e{(A-LC)T}\varepsilon_{\parallel}(t^{\ell}_{0})+\J^h_\ell= \e{(A-LC)T}\varepsilon_{\parallel}(T_{\ell-1})+\J^h_\ell,
\end{equation*}
where we have used the continuity of $\hat{x}_{\parallel}$ in $T_\ell=t_N^\ell$ and $T_{\ell-1}=t_N^{\ell-1}$.
In the same way, we obtain
\[\varepsilon_{\parallel}(T_{\ell})=\e{(A-LC) \ell T}\varepsilon_{\parallel}(0) + \sum_{j=1}^{\ell} \e{(A-LC) (\ell-j) T} \J^h_j.\]
The result is obtained by taking the norm and using Proposition \ref{stability}.
\end{proof}
Recall that our approach aims at preserving Luenberger's rate of convergence. Thanks to Lemma~\ref{parallel-error}, we can now define a criterion to reach this goal. 
\begin{proposition}\label{bound-jumps}
Given an arbitrary parameter $\widetilde{\gamma}>0$, let us assume that $h$ satisfies
\begin{equation}
	\gamma\sum_{n=1}^{N-1} \e{\mu n\Delta T}\|J_{\ell,n}^{h}\| \leq \widetilde{\gamma} \frac{\e{-\mu (\ell-1) T }}{2^\ell}, \label{stop-crit}
\end{equation}
for all $\ell\geq 0$. Then, the rate of convergence of $\hat{x}_{\parallel}(t)$ to $x(t)$ is bounded by $\mu$, i.e.
\begin{equation}\label{Luencv}
\norm{\varepsilon_{\parallel}( T_{\ell})} \leq \gamma \left(\norm{x(0) -\hat{x}(0)} +\widetilde{\gamma}\right)\e{-\mu \ell T}.
\end{equation}
\end{proposition}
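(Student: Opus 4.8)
The plan is to substitute the stopping criterion \eqref{stop-crit} into the \textit{a posteriori} estimate of Lemma \ref{parallel-error}, after first rewriting the bound on the aggregated jump $\J^h_j$ so that it matches the weighted sum appearing in \eqref{stop-crit}. Concretely, the whole task reduces to controlling $\sum_{j=1}^{\ell}\e{\mu j T}\|\J^h_j\|$ in \eqref{ineq:parallel-error} by the constant $\widetilde{\gamma}$.

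First I would bound each $\|\J^h_j\|$ termwise. Applying the triangle inequality to the definition \eqref{capJ} and then Proposition \ref{stability} to each matrix exponential gives
\[
\|\J^h_j\| \le \sum_{n=1}^{N-1}\gamma\,\e{-\mu(N-n)\Delta T}\,\|J^h_{j,n}\|.
\]
The key algebraic observation is that $N\Delta T = T$, so $\e{-\mu(N-n)\Delta T}=\e{-\mu T}\e{\mu n\Delta T}$; this recasts the right-hand side as $\e{-\mu T}$ times exactly the quantity $\gamma\sum_{n=1}^{N-1}\e{\mu n\Delta T}\|J^h_{j,n}\|$ that is controlled by the criterion. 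Invoking \eqref{stop-crit} with $\ell=j$ then yields $\|\J^h_j\|\le \widetilde{\gamma}\,\e{-\mu j T}/2^{j}$, and hence $\e{\mu j T}\|\J^h_j\|\le \widetilde{\gamma}/2^{j}$.

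It then remains to sum over $j$ and bound the partial geometric sum by its limit, $\sum_{j=1}^{\ell}2^{-j}\le 1$, which gives $\sum_{j=1}^{\ell}\e{\mu j T}\|\J^h_j\|\le \widetilde{\gamma}$ uniformly in $\ell$. Substituting this into the estimate of Lemma \ref{parallel-error} produces \eqref{Luencv} directly. The derivation is essentially routine; the only genuine step is recognizing that the exponential weights and the $2^{-\ell}$ prefactor in \eqref{stop-crit} have been tailored for this purpose: the factor $N\Delta T = T$ converts the observer decay $\e{-\mu(N-n)\Delta T}$ into the growth $\e{\mu n\Delta T}$ weighting the jumps, while the geometric prefactor guarantees a convergent series whose total $\widetilde{\gamma}$ is independent of $\ell$, thereby preserving the rate $\mu$.
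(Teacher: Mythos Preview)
Your proof is correct and follows essentially the same approach as the paper's own argument: bound $\|\J^h_j\|$ via Proposition~\ref{stability} and the identity $N\Delta T = T$ to invoke \eqref{stop-crit}, then sum the resulting geometric series and substitute into Lemma~\ref{parallel-error}. The paper's version is terser but the logic is identical.
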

\begin{proof}
Using~\eqref{stop-crit} and~\eqref{capJ}, 
 we find that
\[\|\J^h_\ell\|\leq \sum_{n=1}^{N-1}\gamma \e{-\mu(N-n)\Delta T}\|J_{\ell,n}^{h}\| \leq  \widetilde{\gamma} \frac{\e{-\mu \ell T }}{2^\ell}.\]
Combining this inequality with~\eqref{ineq:parallel-error}
, we deduce that
\begin{align*}
\|\varepsilon_{\parallel}(T_{\ell})
 \|\leq &\left( \norm{ x(0)-\hat{x}(0)} +\sum_{j=1}^{\ell} \e{\mu j T}\cdot \widetilde{\gamma} \frac{\e{-\mu j T }}{2^j}\right) \gamma \e{-\mu\ell T} \\
=&  \gamma \left(\|\varepsilon_{\parallel}(0)\| + \widetilde{\gamma}\sum_{j=1}^{\ell}   \frac{1}{2^j}\right)\e{-\mu \ell T}\\
\leq &  \gamma \left(\|\varepsilon_{\parallel}(0)\| + \widetilde{\gamma}\right)\e{-\mu \ell T}.
\end{align*}
The result then follows from $\varepsilon_{\parallel}(0):=x(0) -\hat{x}_\parallel(0)=x(0) -\hat{x}(0)$.
\end{proof}
Note that~\eqref{stop-crit} actually deals with an \textit{a posteriori} quantity and, as such, can be used as a criterion to fix the level of accuracy of the time-parallelization procedure used in line on each window. We are now in a position to describe precisely our algorithm\FK{: the procedure for estimating 
$x(t)$ at $t=MT$ for some $M\in\N$ is detailed in Algorithm~\ref{diamond-strategy}.}
\begin{algorithm}
\caption{Diamond strategy}
\begin{algorithmic}
\STATE \textbf{Input:} $A,C,\hat{x_0},T,N,(\mu_i)_{i=1,\dots m},\widetilde{\gamma}, L$
\STATE \textbf{Output:}$(t_n^{\ell})_{\substack{n=0,\dots, N \\ \hspace{-0.7cm}\ell\in\N}},(\hat{X}_{n,\ell}^{h})_{\substack{n=0,\dots, N \\ \hspace{-0.7cm}\ell\in\N}}$ 
\STATE $L := \mathtt{place}(A,C,(\mu_i)_{i=1,\dots, m})$\; \COMMENT{place computes $L$, as in Theorem \ref{observability}}
\STATE $\mu := \min\limits_{\nu \in \sigma(A-LC)}{|\nu|}$\;
\STATE $\Delta t \vcentcolon = \frac{T}{N}$\;
\STATE $\ell := 0$\;
\REPEAT
	\STATE $T_{\ell} := \ell T$\; 
	\STATE $(\forall n \in \{1,\dots,N\}) \quad t_n^{\ell} := T_{\ell} + n\Delta T $ \;
	\IF{$\ell = 0$}
	\STATE{$\hat{X}_{\ell,0}^h := \hat{x}_0$}
	\ELSE 
	\STATE {$\hat{X}_{\ell,0}^h := \hat{x}_\parallel(T_{\ell-1}^-)$}  
	\ENDIF
	\STATE  Determine $h$ such that~\eqref{stop-crit} holds\; 
	\STATE  $\{\hat{X}_{\ell,n}^{h}\}_{n=1,\dots,N-1} := \mathtt{GTP}(\hat{X}_{\ell,0}^{h})$\;\COMMENT{Using a generic time-parallelization procedure (GTP)}
	\STATE Compute $\hat{x}_\parallel$ on $W_\ell$, by~\eqref{parallel-continuous}
	\STATE Assign $\ell \leftarrow \ell +1$	
\UNTIL{$\ell=M$}
\end{algorithmic}
\label{diamond-strategy}
\end{algorithm}


\section{Time Parallelization}\label{sec:parallel}
Note that Algorithm \ref{diamond-strategy} is defined independently of the chosen the parallel-in-time solver, since the jumps are computed \textit{a posteriori}. However, by specifying it, we can study in more detail the conditions that makes the criterion \eqref{stop-crit} satisfied and the complexity of the overall procedure. Indeed, having in hand an \textit{a priori} estimate of the jumps, one can determine the accuracy $h$ required on each window and bound the efficiency of the \textit{Diamond strategy}. In this way, we consider the Parareal algorithm as the time-parallel method (denoted by $\mathtt{GTP}$ in Algorithm~\ref{diamond-strategy}).

\subsection{The Parareal algorithm} 
Introduced by Lions, Maday and Turinici \cite{LMT}, the Parareal algorithm decomposes the \FK{solution} 
of an evolution problem by means of a partition of the considered bounded time interval. Assigning each of the corresponding subintervals to a processor, it alternately updates the initial conditions and solves the (smaller) problems on the subintervals in parallel, reducing the computational cost of the overall \FK{solution}. 
Let us describe \FK{the algorithm} 
more precisely. Given the problem 
\begin{equation}\left\{
\begin{aligned}
	\dot{u}(t) =&  f(u(t)),\quad t\in [0,T]\\
	u(0)	=&  u_0,
\end{aligned} \right. \label{parareal-example}
\end{equation}
decompose $[0,T]$ into a partition of $M$ subintervals $(t_{n-1},t_n)$. Consider then two solvers $\F$ and $\G$, that compute a fine and a coarse numerical approximation of $u$, respectively. The former is considered to be computationally expensive and consequently restricted to the (parallel) \FK{solution} 
of initial-value problems with high accuracy in each subinterval $(t_{n-1},t_n)$, whereas the latter is faster and can be used for solving (sequentially) on large intervals as $[0,T]$. For an arbitrary initial condition $\tilde{u}$ given in $t = t_{n-1}$, denote the corresponding local approximations of $u(t_{n})$ by $\F(t_{n},t_{n-1},\tilde{u})$ and $\G(t_{n},t_{n-1}, \tilde{u})$, respectively. In this framework, $\left(u(t_n)\right)_{n=1,\dots,M}$ is approximated by $(U_n^k)_{n=1,\dots,M}$, which is computed according to Algorithm \ref{parareal-algorithm}. 

\begin{algorithm}
\caption{Parareal algorithm}
\begin{algorithmic}
\STATE \textbf{Input: }{ $u_0,T,M,\mathrm{Tol}$}
\STATE \textbf{Output: }{ $(t_n)_{n=1,\dots,M},(U_{n}^{k^{*}})_{n=1,\dots,M}$}
\STATE $\Delta T \vcentcolon = \frac{T}{N}, t_0 := 0$\; 
\STATE $U_{0}^{0} \vcentcolon= u_0$\;\COMMENT{Initialization of the initial conditions}
\FOR{$1 \leq n \leq M$}
	\STATE $t_n \vcentcolon= n\Delta T $ \;
	\STATE $U_{n}^{0} \vcentcolon= \G(t_{n},t_{n-1},U_{n-1}^{0})$\;
\ENDFOR
\STATE $k \vcentcolon= 0$\;
\REPEAT
	\STATE$U_{0}^{k} \vcentcolon = u_0$\;	
	\FOR{$1 \leq n \leq M$}
	\STATE $U_{n}^{k+1} \vcentcolon= \F(t_{n},t_{n-1},U_{n-1}^{k})+\G(t_{n},t_{n-1},U_{n-1}^{k+1}) -\G(t_{n},t_{n-1},U_{n-1}^{k})$\;
	\STATE $J_{n}^{k} \vcentcolon= U_{n}^k -u(t_n^{-})$\;
	\STATE $k \leftarrow k +1$			
	\ENDFOR	
\UNTIL $\displaystyle{\max_{1\leq n\leq M} \norm{J_{n}^{k}} \leq \mathrm{Tol} }$
\STATE $k^{*} := k-1$	
\end{algorithmic}
\label{parareal-algorithm}
\end{algorithm}

Notice that the superscript $k$ in Algorithm \ref{parareal-algorithm} plays the role of the parameter $h$, introduced in the previous section. 

Gander and Vandewalle  show in \cite{GV} that the parareal algorithm reads as a multi-shooting algorithm, in the sense that the method is an approximate Newton method.
Indeed, solving the multiple shooting equations with the Newton's method yields
\[ U_{n}^{k+1} = u_{n-1}(t_{n},U_{n-1}^{k}) 
	+\dfrac{\partial u_{n-1}}{\partial U_{n-1}}(t_{n},U_{n-1}^{k})(U_{n-1}^{k+1}-U_{n-1}^{k}),\]
where $u_{n-1}(t_n,U_{n-1}^k)$ denotes the exact solution of	\eqref{parareal-example} at $t_{n}$, with initial condition $U_{n-1}^k$ at $t_{n-1}$. Approximating the exact solution $u_{n-1}(t_{n},U_{n-1}^{k})$ using the fine solver and the Jacobian term $\dfrac{\partial u_{n-1}}{\partial U_{n-1}}(t_{n},U_{n-1}^{k})(U_{n-1}^{k+1}-U_{n-1}^{k})$ 
by a difference on a coarse grid, gives 
		\begin{equation}	
			U_{n}^{k+1} = \F(t_{n},t_{n-1},U_{n-1}^{k})+\G(t_{n},t_{n-1},U_{n-1}^{k+1}) -\G(t_{n},t_{n-1},U_{n-1}^{k}).
		\label{parareal-update}
		\end{equation}
It follows that the convergence is super linear and that the number of iterations required to satisfy the criterion~\eqref{stop-crit} will not necessarily increase linearly with respect to $M$. 
 In addition, these authors obtain 
an estimate which shows that after $k$ iterations, the algorithm gives rise to the fine solution on the first $k$ subintervals. An improvement of their estimate, due to Gander and Hairer~\cite{GH}, assumes that the coarse solver must satisfy a Lipschitz condition 
\begin{equation*}
	\norm{\G(t_{n},t_{n-1},y) -\G(t_{n},t_{n-1},z)} 
	\leq (1+c \Delta T)\norm{y-z},
\end{equation*}
for a positive constant $c$. However, this result 
\FK{does not capture the enhanced convergence rate when the differential equation
itself exhibits decaying behaviour, i.e., when $c<0$.} Since we are interested in coupling this algorithm with the Luenberger observer and taking advantage of its decaying behavior, we provide a result adapted to this case, which follows from \cite{GKZ2018}. 

\begin{theorem}[Convergence of the Parareal algorithm for decaying problems]\label{parareal-convergence} 
Given an initial condition $z$ at time $t_{n-1}$, 
let $\F(t_{n},t_{n-1},z)$ and $\G(t_{n},t_{n-1},z)$ be be the exact solution at time $t_{n}$ and its approximation by a coarse integrator respectively. Assume that the local truncation error 
$\tau(t_{n},z) \vcentcolon = \F(t_{n},t_{n-1},z) - \G(t_{n},t_{n-1},z)$ satisfies for all $y$ and $z$
\begin{align}
	\norm{\tau(t_{n},z)}	\leq & \alpha, \label{condalpha}\\
	\norm{\tau(t_{n},y) -\tau(t_{n},z)} 	\leq & \beta\norm{y-z},\label{condbeta}
\end{align}
where $\alpha,\beta>0$ are constants, and that $\F$ and $\G$ are Lipschitz with respect to the initial conditions:
\begin{equation}
\max\left\{
		\norm{\F(t_{n},t_{n-1},y) -\F(t_{n},t_{n-1},z)},
		\norm{\G(t_{n},t_{n-1},y) -\G(t_{n},t_{n-1},z)}\right\} 
	\leq \eta\norm{y-z},\label{condepsi}
\end{equation}
for a constant $\eta \in (0,1)$. 
The error $\norm{U_n^{k} - u(t_n)}$ at iteration $k$ is bounded by $B_{n}^{k}$, defined by
\begin{equation}\label{kwok}
	B_{n}^{k} := 
		\begin{cases}
			0 & n \leq k \\
			\alpha \beta^k \sum\limits_{i=0}^{n-k-1}\tbinom{k+i}{k} \varepsilon^i & n > k.
		\end{cases}
\end{equation}
\end{theorem}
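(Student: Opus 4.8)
The plan is to reduce the statement to a scalar two-index recurrence for the error norms and then verify, by induction, that the proposed bound $B_n^k$ solves it (reading the $\varepsilon$ appearing in~\eqref{kwok} as the Lipschitz constant $\eta$). Write $e_n^k := U_n^k - u(t_n)$ and $\delta_n^k := \norm{e_n^k}$. Since $\F$ is the exact flow, $u(t_n) = \F(t_n,t_{n-1},u(t_{n-1}))$; subtracting this from the Parareal update~\eqref{parareal-update} and inserting $\F = \G + \tau$ into the fine-solver difference, the two copies of $\G(t_n,t_{n-1},U_{n-1}^k)$ cancel, leaving
\begin{equation*}
e_n^{k+1} = \bigl[\G(t_n,t_{n-1},U_{n-1}^{k+1}) - \G(t_n,t_{n-1},u(t_{n-1}))\bigr] + \bigl[\tau(t_n,U_{n-1}^k) - \tau(t_n,u(t_{n-1}))\bigr].
\end{equation*}
Taking norms and applying~\eqref{condbeta} and~\eqref{condepsi} gives the fundamental recurrence
\begin{equation*}
\delta_n^{k+1} \leq \eta\,\delta_{n-1}^{k+1} + \beta\,\delta_{n-1}^k .
\end{equation*}

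Next I would pin down the boundary data. Since the initial value is imposed exactly at every iteration, $\delta_0^k = 0$ for all $k$. For the base layer $k=0$ the predictor gives $U_n^0 = \G(t_n,t_{n-1},U_{n-1}^0)$, so subtracting $u(t_n)$ and using $\F=\G+\tau$ together with~\eqref{condalpha} and~\eqref{condepsi} yields $\delta_n^0 \leq \eta\,\delta_{n-1}^0 + \alpha$; solving this with $\delta_0^0=0$ produces the geometric sum $\delta_n^0 \leq \alpha\sum_{i=0}^{n-1}\eta^i$, which is precisely $B_n^0$. Separately, the finite-speed-of-propagation property $\delta_n^k = 0$ for $n \leq k$ follows by a short induction on $n$: the statement ``$\delta_n^m=0$ for all $m\geq n$'' is seeded by $\delta_0^m=0$, and the recurrence propagates it since $\delta_{n-1}^{k+1}=0$ and $\delta_{n-1}^{k}=0$ force $\delta_n^{k+1}=0$ whenever $k+1\geq n$. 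This settles the first case of~\eqref{kwok}.

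With these in hand I would close the general case by induction on $k$: assuming $\delta_m^k \leq B_m^k$ for all $m$, prove $\delta_n^{k+1}\leq B_n^{k+1}$ by an inner induction on $n$. For $n\leq k+1$ both sides vanish, and for $n\geq k+2$ the recurrence plus the two induction hypotheses give $\delta_n^{k+1} \leq \eta\,B_{n-1}^{k+1} + \beta\,B_{n-1}^{k}$, so it remains to check the purely combinatorial identity $\eta\,B_{n-1}^{k+1} + \beta\,B_{n-1}^{k} = B_n^{k+1}$. After factoring $\alpha\beta^{k+1}$ and reindexing the $\eta$-shifted sum coming from $B_{n-1}^{k+1}$, the coefficient of $\eta^i$ is $\binom{k+i}{k}+\binom{k+i}{k+1}$ for $i\geq 1$ (and $1$ for $i=0$), and Pascal's rule $\binom{k+i}{k}+\binom{k+i}{k+1}=\binom{k+1+i}{k+1}$ collapses the expression to exactly the coefficients of $B_n^{k+1}$. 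The derivation of the recurrence and the base cases are routine; the main obstacle is the index bookkeeping in this last identity, namely getting the summation limits and the reindexing right so that Pascal's rule applies cleanly and the empty-sum boundary cases ($n=k+1,\,k+2$) stay consistent.
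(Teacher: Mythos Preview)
Your proof is correct and follows the same opening moves as the paper: you derive the same error recurrence $\delta_n^{k+1}\le \eta\,\delta_{n-1}^{k+1}+\beta\,\delta_{n-1}^{k}$ (the paper's ``$\varepsilon$'' is indeed $\eta$), the same initial bound $\delta_n^0\le \alpha+\eta\,\delta_{n-1}^0$, and the same boundary condition $\delta_0^k=0$. Where you diverge is in how the recurrence is solved. The paper introduces the generating functions $\rho_k(\zeta)=\sum_{n\ge 1}B_n^k\zeta^n$, obtains the closed form $\rho_k(\zeta)=\alpha\beta^k\zeta^{k+1}/\bigl[(1-\zeta)(1-\eta\zeta)^{k+1}\bigr]$, and reads off the coefficients by expanding the product of a geometric series and a negative-binomial series. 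You instead verify directly, by a double induction and Pascal's rule, that the stated formula satisfies the recurrence with the right boundary data. Your route is more elementary and self-contained---no formal power series machinery---but it presupposes the closed form; the paper's generating-function argument actually \emph{derives} the formula and would adapt more readily if the recurrence were perturbed. Either way the argument is complete; your index bookkeeping (the empty sums at $n=k+1,k+2$ and the reindexing that produces $\binom{k+j}{k}+\binom{k+j}{k+1}=\binom{k+1+j}{k+1}$) checks out.
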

\JS{
\begin{remark}
Suppose that~\eqref{condalpha} and~\eqref{condepsi} are satisfied for some $\alpha>0$ and $\eta\in(0,1)$, then~\eqref{condbeta} holds with $\beta:=2\eta$, so that~\eqref{condbeta} seems unnecessary. However, using $\beta:=2\eta$ does not necessarily give useful bounds when $0.5 < \eta < 1$.
\end{remark}
}
\begin{proof}
Using Definition \eqref{parareal-update} and the fact that $\F$ corresponds to the exact solution on $(t_{n-1},t_n)$, we obtain
\begin{align*}
U_n^{k} - u(t_n) 
	=& \F(t_{n},t_{n-1},U_{n-1}^{k-1}) +\G(t_{n},t_{n-1},U_{n-1}^{k}) -\G(t_{n},t_{n-1},U_{n-1}^{k-1}) \\
	&  -\F(t_{n},t_{n-1},u(t_{n-1})) \\
	=& \tau(t_{n}, U_{n-1}^{k-1}) - \tau(t_{n},u(t_{n-1})) +\G(t_{n},t_{n-1},U_{n-1}^{k}) - \G(t_{n},t_{n-1},u(t_{n-1})).
\end{align*}
Taking norms and combining the resulting inequality with~\eqref{condbeta} and~\eqref{condepsi} gives
\begin{equation*}
\norm{U_n^{k} - u(t_n)} 
	\leq \beta\norm{U_{n-1}^{k-1} - u(t_{n-1})} + \varepsilon
	\norm{U_{n-1}^{k} - u(t_{n-1})}.
\end{equation*}
The error in the initial condition can be estimated similarly. We have
\begin{align*}
U_n^{0} - u(t_n) =& \G(t_{n},t_{n-1},U_{n-1}^{0}) - \F(t_{n},t_{n-1}u(t_{n-1})) \\
& + \G(t_{n},t_{n-1},u(t_{n-1})) - \G(t_{n},t_{n-1},u(t_{n-1})) + \F(t_{n},t_{n-1},u(t_{n-1})) \\
\leq& \alpha +\eta\norm{U_{n-1}^0-u(t_{n-1})},
\end{align*}
which gives, by means of~\eqref{condalpha} and~\eqref{condepsi}
\[\norm{U_n^{0} - u(t_n)}\leq \alpha + \eta \norm{U_{n-1}^{0} - u(t_{n-1})}. \]
Therefore, an upper bound $B_n^{k}$ for $\norm{U_n^{k}-u(t_n)}$ satisfies the recurrence relation
\begin{align}
	B_n^{k} 	=&  \beta B_{n-1}^{k-1}+ \eta B_{n-1}^{k}, \label{recur-1} \\
	B_n^0 		=&  \alpha + \eta B_{n-1}^0, \label{recur-2}
\end{align} 
with $B_0^k = 0$ for all $k$. This recurrence can be solved by means of generating functions, namely, by defining the formal power series
\[ \rho_k(\zeta) = \sum_{n\geq 1}B_n^k \zeta^n. \]
Multiplying \eqref{recur-1} and \eqref{recur-2} by $\zeta^n$ and summing over $n\geq 1$ gives
\begin{align*}
\rho_{k}(\zeta)	=&  \beta\zeta \rho_{k-1}(\zeta) + \eta\zeta \rho_{k}(\zeta), \\
\rho_0(\zeta)	
	=&  \frac{\alpha\zeta}{1-\zeta} + \eta\zeta \rho_0(\zeta),
\end{align*}
which can be solved by induction and yields the explicit formula
\[ 
\rho_k(\zeta) = \frac{\alpha \beta^k\zeta^{k+1}}{(1-\zeta)(1-\eta\zeta)^{k+1}}. 
\]
Expanding $\rho_k(\zeta)$ in a power series leads to 
\begin{align*}
\rho_k(\zeta) 
	=&  \alpha \beta^k \zeta^{k+1}
		\Big(\sum_{i\geq0} \zeta^i\Big)
		\Big(\sum_{j\geq 0} \tbinom{k+j}{k} (\eta\zeta)^j\Big)
 	= \alpha \beta^k \zeta^{k+1}
 		\sum_{n\geq 0}\Big(\sum_{i=0}^{n}\tbinom{k+i}{k} \eta^i\Big)\zeta^{n}\\
 =& \sum_{n\geq 0}\Big(\alpha \beta^k\sum_{i=0}^{n}\tbinom{k+i}{k}\eta^i\Big)\zeta^{n+k+1}.
\end{align*}
Then, for $n\leq k$ we have $B_0^k = \ldots = B_k^k = 0$; whereas for $n > k$, we obtain
\[ 
B_n^{k} = \alpha\beta^k\sum_{i=0}^{n-k-1}\tbinom{k+i}{k} \eta^i,
\]
and the result follows.
\end{proof}
We can derive from the previous result an estimate on the jumps.
\begin{corollary}\label{parareal-bounded-jumps}
After $k$ iterations of Algorithm~\ref{parareal-algorithm}, the jump $\widetilde{J}_n^k := U_n^k -u(t_n^{-})$ satisfies $\norm{\widetilde{J}_n^k} \leq 2 B_n^k$. 
\end{corollary}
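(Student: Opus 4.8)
The plan is to bound the jump $\widetilde J_n^k$ by the iteration error $\norm{U_n^k-u(t_n)}$, which Theorem~\ref{parareal-convergence} already controls by $B_n^k$, and then to absorb the remaining contribution into the factor $2$. First I would make the left limit explicit: on $(t_{n-1},t_n)$ the reconstructed parallel trajectory is the fine propagation of the current shooting value $U_{n-1}^k$, and since $\F$ is the exact solver, $u(t_n^-)=\F(t_n,t_{n-1},U_{n-1}^k)$, whereas the exact global solution obeys $u(t_n)=\F(t_n,t_{n-1},u(t_{n-1}))$. Inserting $\pm u(t_n)$ and using the triangle inequality together with the Lipschitz bound~\eqref{condepsi} on $\F$, I get
\[
\norm{\widetilde J_n^k}\le\norm{U_n^k-u(t_n)}+\norm{\F(t_n,t_{n-1},u(t_{n-1}))-\F(t_n,t_{n-1},U_{n-1}^k)}\le B_n^k+\eta B_{n-1}^k,
\]
the last step applying Theorem~\ref{parareal-convergence} on both subintervals $n$ and $n-1$.

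It then suffices to prove the elementary inequality $\eta B_{n-1}^k\le B_n^k$. For $n\le k$ both terms vanish (and in fact the jump is exactly zero, the fine solution having been reached), so only the case $n>k$ carries content. There I would use the closed form~\eqref{kwok}: after the reindexing $\eta B_{n-1}^k=\alpha\beta^k\sum_{j=1}^{n-k-1}\binom{k+j-1}{k}\eta^{j}$, I compare it term by term with $B_n^k=\alpha\beta^k\sum_{i=0}^{n-k-1}\binom{k+i}{k}\eta^{i}$. Since $\binom{k+j-1}{k}\le\binom{k+j}{k}$ for every $j\ge1$ and $B_n^k$ carries the extra nonnegative term at $i=0$, every summand of $\eta B_{n-1}^k$ is dominated by the matching summand of $B_n^k$, whence $\eta B_{n-1}^k\le B_n^k$ and therefore $\norm{\widetilde J_n^k}\le 2B_n^k$.

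I expect the only delicate point to be the index bookkeeping in this comparison, in particular the boundary case $n=k+1$, where $B_{n-1}^k=B_k^k=0$ reduces the inequality to $0\le\alpha\beta^k$, and the correct shift of the binomial sum. The splitting of the jump and the two appeals to $\eta$ and to Theorem~\ref{parareal-convergence} are routine. One notational caveat: the exponent written $\varepsilon^i$ in~\eqref{kwok} should read $\eta^i$, in agreement with the generating-function computation proving Theorem~\ref{parareal-convergence}; I would use $\eta$ throughout.
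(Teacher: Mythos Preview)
Your proof is correct and follows the same line as the paper's: identify $u(t_n^-)=\F(t_n,t_{n-1},U_{n-1}^k)$, insert $\pm u(t_n)$, apply the Lipschitz bound on $\F$, and arrive at $\norm{\widetilde J_n^k}\le B_n^k+\eta B_{n-1}^k$. The paper closes the last step slightly more simply, invoking $\eta\in(0,1)$ together with the monotonicity $B_{n-1}^k\le B_n^k$ (immediate from the explicit sum) rather than your term-by-term binomial comparison, but the two arguments are equivalent; your observation about the $\varepsilon$/$\eta$ typo in~\eqref{kwok} is also correct.
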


\begin{proof}
Since $\F$ is an exact solver, we have
\begin{align*}
\norm{\widetilde{J}_n^k} 
    =&  \norm{U_n^k -\F(t_{n},t_{n-1},U_{n-1}^{k})} \\
    \leq & \norm{U_n^k-u(t_n)} 
    +\norm{u(t_n) -\F(t_{n},t_{n-1},U_{n-1}^{k})} \\
    \leq & B_n^k 
    +\norm{\F(t_{n},t_{n-1},u(t_{n-1})) -\F(t_{n},t_{n-1},U_{n-1}^{k})} \\
    \leq & B_n^k + \eta\norm{u(t_{n-1}) -U_{n-1}^k}
    = B_n^k + \eta B_{n-1}^k \leq 2 B_n^k,
\end{align*}
\JS{since $\eta\in(0,1)$ and $ B_{n-1}^k \leq B_n^k$.}
The result follows from~\eqref{recur-1}. 
\end{proof}

\subsection{Combination with Luenberger observer} 
We now use the Parareal scheme to define the initial conditions $(\hat{X}_{n,\ell}^{k})_{\substack{n=0,\dots, N \\ \hspace{-0.7cm}\ell\in\N}}$ of the time-parallelized Luenberger observer~\eqref{parallel-continuous}, meaning that 
\begin{equation}\left\{
\begin{aligned}
	\hat{X}_{\ell,n}^{k} =&  \F(t_{n}^{\ell},t_{n-1}^{\ell},\hat{X}_{\ell,n-1}^{k-1})+\G(t_{n}^{\ell},t_{n-1}^{\ell},\hat{X}_{\ell,n-1}^{k}) -\G(t_{n}^{\ell},t_{n-1}^{\ell},\hat{X}_{\ell,n-1}^{k-1}) \\
	\hat{X}_{\ell,n}^{0} =&  \G(t_{n}^{\ell},t_{n-1}^{\ell},\hat{X}_{\ell,n-1}^{0}),\quad \hat{X}_{\ell,0}^{0} = \hat{x}_{\parallel}(T_{\ell}^{-}).
\end{aligned}\right. \label{parareal-luenberger}
\end{equation}
Recall that in this setting, jumps are obtained during the execution of Algorithm~\ref{parareal-algorithm} and are consequently known \textit{a posteriori}.  
 In order to estimate the efficiency of Algorithm~\ref{diamond-strategy}, we now propose on the contrary to derive \textit{a priori} upper bounds of the number of iterations  observed in practice and complexity. 
Given a tolerance parameter $\mathrm{Tol}$ and let define the corresponding efficiency of Algorithm~\ref{diamond-strategy} by 
\begin{equation}\label{efficiency}
	E := \dfrac{\tau_s}{N \tau_p} 
\end{equation}
where $\tau_s$ and $\tau_p$ are the CPU time required to achieve $\|\varepsilon(t)\|\leq \mathrm{Tol}$ using a sequential solver and $\|\varepsilon_{\parallel}(t)\|\leq \mathrm{Tol}$ using a parallel solver, respectively. Recall that $N$ represents the number of available processors (and hence, subintervals).
\begin{theorem}\label{efficiency-theorem}
We neglect the time of interprocessor \FK{communication} 
in the computational time.  
Let $\tau_{\cal G}$ and $\tau_{\cal F}$ be the computational times associated with one coarse and one fine \FK{solution} 
of~\eqref{Luenberger} on a interval of length $T$.  
The efficiency of the algorithm satisfies
\begin{equation}\label{parareal-efficiency}
	E \FK{\geq}  \dfrac{\ell^{\rm Tol}\tau_{\cal F}}{\tau_{\cal F} + N\tau_{\cal G}  } \left(\sum_{\ell = 1}^{\ell_{\parallel}^{\rm Tol}} k_{\ell}\right)^{-1},
\end{equation}
where 
\begin{align*}
k_\ell :=& \min_k\left\{k \FK{\;:\;} 2\gamma\sum_{n=k+1}^{N-1} \e{-\mu(N-n)\Delta T}B_n^{k} \leq \widetilde{\gamma} \dfrac{\e{-\mu \ell T}}{2^{\ell}}  \right\},\\
\ell^{\rm Tol}:=&\left\lceil\frac 1{\mu T} \log \left(\gamma\frac{\norm{x(0) -\hat{x}(0)}}{\rm Tol}\right)\right\rceil, \\
\ell_{\parallel}^{\rm Tol}:=& \left\lceil\frac 1{\mu T} \log \left(\gamma\frac{\norm{x(0) -\hat{x}(0)}+\widetilde{\gamma}}{\rm Tol}\right)\right\rceil,
\end{align*}
where $\lceil . \rceil$ denotes the ceiling function. 
\end{theorem}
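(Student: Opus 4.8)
The plan is to compute the two CPU times entering $E=\tau_s/(N\tau_p)$ separately and then assemble the bound, so that an upper bound on $\tau_p$ combined with the exact value of $\tau_s$ produces the claimed lower bound on $E$. First I would fix the number of windows. Since the non-parallelized observer obeys the decay estimate \eqref{luenberger-decay}, the target $\norm{\varepsilon(T_\ell)}\leq\mathrm{Tol}$ is guaranteed as soon as $\gamma\norm{x(0)-\hat{x}(0)}\e{-\mu\ell T}\leq\mathrm{Tol}$, i.e.\ as soon as $\ell\geq\ell^{\rm Tol}$, which is exactly the ceiling defining $\ell^{\rm Tol}$. Processing each of these windows sequentially with the fine solver on an interval of length $T$ costs $\tau_{\cal F}$, so $\tau_s=\ell^{\rm Tol}\tau_{\cal F}$. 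Applying the same reasoning to the parallel error bound \eqref{Luencv}, whose right-hand side carries the extra term $\widetilde{\gamma}$, shows that the time-parallelized observer reaches $\norm{\varepsilon_\parallel(T_\ell)}\leq\mathrm{Tol}$ after $\ell_\parallel^{\rm Tol}$ windows; this fixes the range of the outer summation.

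Next I would determine, for each window $\ell$, the number $k_\ell$ of Parareal iterations that suffices to meet the accuracy criterion \eqref{stop-crit}. The idea is to replace the \emph{a posteriori} jumps by their \emph{a priori} bound. Corollary~\ref{parareal-bounded-jumps} gives $\norm{J_{\ell,n}^{k}}\leq 2B_n^{k}$, and since $\e{-\mu(N-n)\Delta T}=\e{-\mu T}\e{\mu n\Delta T}$, multiplying \eqref{stop-crit} through by $\e{-\mu T}$ recasts it as $\gamma\sum_{n} \e{-\mu(N-n)\Delta T}\norm{J_{\ell,n}^{k}}\leq\widetilde{\gamma}\,\e{-\mu\ell T}/2^\ell$. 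Inserting the jump bound and using $B_n^{k}=0$ for $n\leq k$ then yields precisely the sufficient condition defining $k_\ell$. Hence after $k_\ell$ iterations the criterion \eqref{stop-crit} holds on window $\ell$, and Proposition~\ref{bound-jumps} guarantees that Luenberger's rate of convergence is preserved.

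Finally I would evaluate $\tau_p$ by costing one Parareal iteration. The $N$ fine propagations on subintervals of length $\Delta T=T/N$ are carried out in parallel, so their wall-clock cost is that of a single fine solve on an interval of length $\Delta T$, namely $\tau_{\cal F}/N$; the coarse correction sweep is intrinsically sequential and costs $\tau_{\cal G}$. One iteration therefore costs $\tau_{\cal F}/N+\tau_{\cal G}$, once interprocessor communication (and the single initialization coarse sweep) is neglected as assumed. Summing over the $k_\ell$ iterations of each of the $\ell_\parallel^{\rm Tol}$ windows gives $\tau_p\leq(\tau_{\cal F}/N+\tau_{\cal G})\sum_{\ell=1}^{\ell_\parallel^{\rm Tol}}k_\ell$. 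Substituting $\tau_s=\ell^{\rm Tol}\tau_{\cal F}$ and this upper bound into $E=\tau_s/(N\tau_p)$, and clearing the factor $N$ through $N\tau_p\leq(\tau_{\cal F}+N\tau_{\cal G})\sum_\ell k_\ell$, yields \eqref{parareal-efficiency}.

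I expect the main obstacle to be the cost accounting rather than any inequality: one must carefully separate the parallel fine phase from the sequential coarse phase, track how $\tau_{\cal F}$ and $\tau_{\cal G}$, defined on an interval of length $T$, rescale to subintervals of length $\Delta T$, and justify dropping the initialization sweep and the communication overhead. The other delicate point is the translation of the \emph{a posteriori} criterion \eqref{stop-crit} into the \emph{a priori} quantity $k_\ell$ through Corollary~\ref{parareal-bounded-jumps}, but this reduces to extracting the factor $\e{-\mu T}$ and is essentially algebraic once the jump bound is in place.
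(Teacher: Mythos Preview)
Your proposal is correct and follows essentially the same approach as the paper: bound the actual number of Parareal iterations per window by the \emph{a priori} quantity $k_\ell$ via Corollary~\ref{parareal-bounded-jumps} (after extracting the factor $\e{-\mu T}$ from \eqref{stop-crit}), read off the window counts $\ell^{\rm Tol}$ and $\ell_\parallel^{\rm Tol}$ from the decay estimates \eqref{luenberger-decay} and \eqref{Luencv}, cost one Parareal iteration as $\tau_{\cal F}/N+\tau_{\cal G}$, and assemble. The only cosmetic difference is that the paper explicitly names the observed iteration count $k_\ell^{obs}$ and records the intermediate inequality $k_\ell^{obs}\leq k_\ell$ before passing to the final bound, whereas you leave this implicit.
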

\JS{
The integers $\ell^{\rm Tol}$ and $\ell_{\parallel}^{\rm Tol}$ are upper bounds for the number of windows required to obtain
$\norm{\varepsilon_{\parallel}(T_\ell)}\leq {\rm Tol}$ and
$\norm{\varepsilon_{\parallel}(T_\ell)}\leq {\rm Tol}$, respectively. 
Let us denote by  $k_\ell^{obs}$ the actual number of iterations performed in Algorithm~\ref{diamond-strategy} in the window $W_\ell$. We will see that $k_\ell^{obs}$ is bounded by $k_\ell$.
   }

\begin{proof}
Using Corollary~\ref{parareal-bounded-jumps}, we find that at if $k_\ell$ iterations of~\eqref{parareal-luenberger} are done by Algorithm~\ref{diamond-strategy} in the window $W_\ell$, the left-hand side of \eqref{stop-crit} satisfies
\begin{align*}
\gamma\sum_{n=1}^{N-1} \e{-\mu(N-n)\Delta T}\|J_{\ell,n}^{k_\ell}\| 	\leq & 2\gamma\sum_{n=1}^{N-1} \e{-\mu(N-n)\Delta T}B_n^{k_\ell} \\
=&  2\gamma\sum_{n=k_\ell+1}^{N-1} \e{-\mu(N-n)\Delta T}B_n^{k_\ell} 
\leq \widetilde{\gamma} \dfrac{\e{-\mu \ell T}}{2^{\ell}},
\end{align*}
meaning that~\eqref{stop-crit} is satisfied. As a consequence, we have
\begin{equation}\label{estk}
k_\ell^{obs}\leq k_\ell.
\end{equation}
Because of~\eqref{luenberger-decay} and~\eqref{Luencv}, the number of windows required to get $\|\varepsilon(t)\|\leq \mathrm{Tol}$ and $\|\varepsilon_{\parallel}(t)\|\leq \mathrm{Tol}$ are bounded by $\ell^{\rm Tol}$ and $\ell_{\parallel}^{\rm Tol}$, respectively. 
 In view of~\eqref{parareal-luenberger} and  since we neglect the time of interprocessor \FK{communication,} 
 the part of the total computational time spend required by Algorithm~\ref{diamond-strategy} to deal with the window $W_\ell$ 
 \FK{is given by}
$\tau_{p,\ell}= k_\ell^{obs} (\frac{\tau_{\cal F}}{N} + \tau_{\cal G}  ).$
 On the other hand, the fine solver needs $\tau_{s,\ell}=\tau_{\cal F}$ to 
\FK{complete one solve of~\eqref{Luenberger} on $W_\ell$ on the fine grid.}
\JS{Because of~\eqref{estk}}, the efficiency 
satisfies
\[	E  \;\FK{ \geq \dfrac{\ell^{\rm Tol}\tau_{\cal F}}{\tau_{\cal F} + N\tau_{\cal G}  } \left(\sum_{\ell = 1}^{\ell_{\parallel}^{\rm Tol}} k_{\ell}^{obs}\right)^{-1} \geq } \; \dfrac{\ell^{\rm Tol}\tau_{\cal F}}{\tau_{\cal F} + N\tau_{\cal G}  } \left(\sum_{\ell = 1}^{\ell_{\parallel}^{\rm Tol}} k_{\ell}\right)^{-1},
\]
which is the desired estimate.
\end{proof}

\subsection{Variable window approach} 
Using the results of the previous section, we can propose a variant of Algorithm~\ref{diamond-strategy} devoted to the case of a large number of processors. 
Instead of 
\FK{always using the same window length $T$, we now choose the window length $T_\ell$ as a function of a
prescribed number of iterations $k_\ell$, in a way that ensures 
that the error estimate in Corollary~\ref{parareal-bounded-jumps} 
falls below the given tolerance after $k_\ell$ iterations.
Since the parareal error must decrease at the same rate as the 
assimilation error as $t$ increases, the number of iterations $k_\ell$ must
increase with $\ell$; therefore,}
we \FK{will} fix the number of parareal iterations applied on each window  to $k'_{\ell} := \ell$, \FK{and determine the window length $T_\ell$
\textit{a priori} using Corollary~\ref{parareal-bounded-jumps}.} For the sake of clarity, we denote by $(W'_\ell)_{\ell\in\N}$ the corresponding set of windows. Suppose that the window $W'_\ell$ is composed of $N_\ell$ subintervals of lengths $\Delta T$, i.e., $W'_{\ell} = \bigcup_{n=0}^{N_\ell-1} (t_n^{\ell},t_{n+1}^{\ell})$ and define, for $\ell\in\N$, $T'_\ell:=t_0^{\ell+1}=t_{N_{\ell}}^\ell$, meaning that $T'_\ell=\sum_{j=1}^\ell N_j \Delta T$ if $\ell>0$ and $T'_0=0$. Since the number of iterations is now fixed for each window, we simply denote by $J_{\ell,n}$ (instead of $J^\ell_{\ell,n}$) the jumps observed at $t_n^\ell$. Lemma~\ref{parallel-error} then translates as follows. 

\begin{lemma}\label{parallel-error-var}
Let $\ell>0$. Under the assumptions of Proposition~\ref{stability} and still denoting by $\varepsilon_{\parallel}(t):= x(t)-\hat{x}_{\parallel}(t)$ the error associated with the approximation~\eqref{parallel-continuous}, we have
\begin{equation}
\norm{\varepsilon_{\parallel}(T'_\ell)}
	\leq \gamma \left( \norm{ x(0)-\hat{x}(0)} +\sum_{j=1}^{\ell} \e{\mu  T_j'}\| {\J_j}'\|\right)\e{-\mu T_\ell'} \label{ineq:parallel-error-var}
\end{equation}
where 
\begin{equation}\label{capJvar}
{\J_\ell}':=-\sum_{n=1}^{N_\ell-1} \e{(A-LC)(N_\ell-n)\Delta T}J_{\ell,n}.
\end{equation}
\end{lemma}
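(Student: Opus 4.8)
The plan is to mimic the proof of Lemma~\ref{parallel-error}, adapting the bookkeeping to windows of variable length. First I would reproduce the telescoping computation within a single window $W'_\ell$. Exactly as before, writing
\[
\varepsilon_{\parallel}(\tnlm) = \e{(A-LC)\Delta T}\left(\varepsilon_{\parallel}(t^{\ell-}_{n-1}) - J_{\ell,n-1}\right)
\]
and iterating over $n = 1,\dots,N_\ell$ (now $N_\ell$ subintervals instead of the fixed $N$), together with the continuity of $\hat{x}_{\parallel}$ at the interfaces $T'_{\ell-1} = t_0^\ell$ and $T'_\ell = t_{N_\ell}^\ell$, yields the one-step recurrence
\[
\varepsilon_{\parallel}(T'_\ell) = \e{(A-LC)N_\ell \Delta T}\,\varepsilon_{\parallel}(T'_{\ell-1}) + {\J_\ell}',
\]
where the accumulated jump term ${\J_\ell}'$ is precisely \eqref{capJvar}. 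The only change from \eqref{capJ} is that the propagation factor acting on the $n$-th jump is now $\e{(A-LC)(N_\ell-n)\Delta T}$ and the inner sum runs up to $N_\ell - 1$.

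Next I would unroll this recurrence over the windows $j = 1,\dots,\ell$. Since the propagation operator over window $W'_j$ is $\e{(A-LC)N_j\Delta T}$ and the cumulative endpoint is $T'_\ell = \sum_{i=1}^\ell N_i\Delta T$, composing these factors gives
\[
\varepsilon_{\parallel}(T'_\ell) = \e{(A-LC)T'_\ell}\,\varepsilon_{\parallel}(0) + \sum_{j=1}^\ell \e{(A-LC)(T'_\ell - T'_j)}\,{\J_j}',
\]
using that the product of the one-window factors from window $j+1$ up to window $\ell$ equals $\e{(A-LC)(T'_\ell - T'_j)}$, with $T'_\ell - T'_j = \sum_{i=j+1}^\ell N_i\Delta T$. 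Finally I would take norms and apply Proposition~\ref{stability} to each matrix exponential, bounding $\norm{\e{(A-LC)(T'_\ell-T'_j)}} \leq \gamma\,\e{-\mu(T'_\ell-T'_j)}$ and $\norm{\e{(A-LC)T'_\ell}} \leq \gamma\,\e{-\mu T'_\ell}$. Factoring out $\e{-\mu T'_\ell}$, using $\e{-\mu(T'_\ell-T'_j)} = \e{-\mu T'_\ell}\e{\mu T'_j}$, and recalling $\varepsilon_{\parallel}(0) = x(0)-\hat{x}(0)$ then produces exactly \eqref{ineq:parallel-error-var}.

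The main obstacle — indeed the only real subtlety, since the argument is structurally identical to Lemma~\ref{parallel-error} — is the bookkeeping with variable window lengths: I must ensure that the telescoping across windows uses the correct cumulative endpoints $T'_j$ rather than the uniform grid $jT$, so that the exponents $T'_\ell - T'_j$ telescope exactly and the factor $\e{\mu T'_j}$ in the final sum is matched to the correct accumulated jump ${\J_j}'$. Everything else, including the passage from the per-subinterval jumps $J_{\ell,n}$ to ${\J_\ell}'$, carries over verbatim once $N$ is replaced by $N_\ell$ and $T_\ell$ by $T'_\ell$.
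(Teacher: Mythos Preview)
Your proposal is correct and follows essentially the same approach as the paper's own proof: the paper likewise reduces to the per-subinterval recurrence $\varepsilon_{\parallel}(\tnlm) = \e{(A-LC)\Delta T}\bigl(\varepsilon_{\parallel}(t^{\ell-}_{n-1})-J_{\ell,n-1}\bigr)$, telescopes within $W'_\ell$ to obtain $\varepsilon_{\parallel}(T'_\ell)=\e{(A-LC)(T'_\ell-T'_{\ell-1})}\varepsilon_{\parallel}(T'_{\ell-1})+\J_\ell'$, unrolls over $j=1,\dots,\ell$, and then takes norms using Proposition~\ref{stability}. Your observation that the only new ingredient is the bookkeeping with the cumulative endpoints $T'_j$ is exactly the point.
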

\begin{proof}
As in the proof of Lemma~\ref{parallel-error}, we have
\begin{align*}
\varepsilon_{\parallel}(\tnlm)
    = \e{(A-LC)\Delta T}(\varepsilon_{\parallel}(t^{\ell -}_{n-1})-J_{\ell,n-1}  ),
\end{align*}
hence
\begin{equation*}
\varepsilon_{\parallel}(T_{\ell}') =\varepsilon_{\parallel}(t^{\ell}_{N_\ell})
= \e{(A-LC)(T_{\ell}'-T_{\ell-1}')}\varepsilon_{\parallel}(t^{\ell}_{0})+\J_\ell'= \e{(A-LC)(T_{\ell}'-T_{\ell-1}')}\varepsilon_{\parallel}(T_{\ell-1}')+\J_\ell',
\end{equation*}
where we have used the continuity of $\hat{x}_{\parallel}$ in $T_\ell'=t_{N_\ell}^\ell$ and $T_{\ell-1}'=t_{N_\ell}^{\ell-1}$.
In the same way, we obtain
\[\varepsilon_{\parallel}(T_{\ell})=\e{(A-LC) T_\ell' }\varepsilon_{\parallel}(0) + \sum_{j=1}^{\ell} \e{(A-LC)(T_\ell'-T_j')} \J_j'.\]
The result is obtained by taking the norm and using Proposition \ref{stability}.
\end{proof}
The rate of convergence can now be preserved \textit{a priori}.
\begin{proposition}\label{bound-jumps-var}
Given $\widetilde{\gamma}>0$ an arbitrary parameter, define $N_\ell$ recursively by
\begin{equation}\label{defNl}
N_{\ell}
:= \max \left\{N \; : \; 2\gamma\sum_{n=\ell
}^{N-1} \e{\mu n\Delta T}B_n^{\ell} \leq \widetilde{\gamma} \dfrac{\e{-\mu T_{\ell-1}'}}{2^{\ell}}  \right\}.
\end{equation}
Then, the rate of convergence of $\hat{x}_{\parallel}(t)$ to $x(t)$ is bounded by $\mu$, i.e.
\begin{equation}\label{Luencv-var}
\norm{\varepsilon_{\parallel}( T_{\ell}')} \leq \gamma \left(\norm{x(0) -\hat{x}(0)} +\widetilde{\gamma}\right)\e{-\mu  T_\ell'}.
\end{equation}
\end{proposition}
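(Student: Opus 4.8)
The plan is to transcribe the proof of Proposition~\ref{bound-jumps} into the variable-window setting, replacing the \textit{a posteriori} criterion~\eqref{stop-crit} by the \textit{a priori} choice of window sizes~\eqref{defNl} and carefully tracking the telescoping of the lengths $T_\ell'$. Two ingredients are available: since exactly $k'_\ell=\ell$ Parareal iterations are performed on $W'_\ell$, Corollary~\ref{parareal-bounded-jumps} furnishes the \textit{a priori} bound $\norm{J_{\ell,n}}\leq 2B_n^{\ell}$ on the observed jumps; and Lemma~\ref{parallel-error-var} provides the error recursion~\eqref{ineq:parallel-error-var}. The whole argument reduces to estimating $\norm{\J_\ell'}$ and feeding it into~\eqref{ineq:parallel-error-var}.

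First I would bound $\norm{\J_\ell'}$. Taking the norm in~\eqref{capJvar}, applying Proposition~\ref{stability} to each factor $\e{(A-LC)(N_\ell-n)\Delta T}$, and using $\norm{J_{\ell,n}}\leq 2B_n^\ell$ gives
\[
\norm{\J_\ell'}\leq 2\gamma\sum_{n=1}^{N_\ell-1}\e{-\mu(N_\ell-n)\Delta T}B_n^\ell
= 2\gamma\,\e{-\mu N_\ell\Delta T}\sum_{n=\ell}^{N_\ell-1}\e{\mu n\Delta T}B_n^\ell,
\]
where the lower index can be raised from $1$ to $\ell$ because $B_n^\ell=0$ for $n\leq\ell$ by~\eqref{kwok}, and where I have pulled the common factor $\e{-\mu N_\ell\Delta T}$ out of the sum. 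The definition~\eqref{defNl} of $N_\ell$ controls exactly the remaining weighted sum, so that
\[
\norm{\J_\ell'}\leq \e{-\mu N_\ell\Delta T}\,\widetilde{\gamma}\,\frac{\e{-\mu T_{\ell-1}'}}{2^\ell}.
\]

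The key bookkeeping step is the identity $N_\ell\Delta T=T_\ell'-T_{\ell-1}'$, immediate from $T_\ell'=\sum_{j=1}^\ell N_j\Delta T$; substituting it yields the clean estimate $\norm{\J_\ell'}\leq \widetilde{\gamma}\,\e{-\mu T_\ell'}/2^\ell$. Inserting this into~\eqref{ineq:parallel-error-var} makes the weights $\e{\mu T_j'}$ cancel the $\e{-\mu T_j'}$ exactly, leaving the geometric sum $\sum_{j=1}^\ell 2^{-j}\leq 1$; combined with $\varepsilon_\parallel(0)=x(0)-\hat{x}(0)$ this produces the announced bound~\eqref{Luencv-var}. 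The only delicate point — and the sole departure from the fixed-window case — is precisely this identification of the extracted factor $\e{-\mu N_\ell\Delta T}$ with $\e{-\mu(T_\ell'-T_{\ell-1}')}$, since it is this telescoping that converts the per-window estimate~\eqref{defNl}, normalized by $\e{-\mu T_{\ell-1}'}$, into a bound normalized by $\e{-\mu T_\ell'}$ suited to summation. Everything else is a routine transcription of the argument for Proposition~\ref{bound-jumps}.
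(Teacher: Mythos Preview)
Your proof is correct and follows essentially the same route as the paper's: both bound $\norm{\J_\ell'}$ via Proposition~\ref{stability} and Corollary~\ref{parareal-bounded-jumps}, extract the factor $\e{-\mu N_\ell\Delta T}=\e{-\mu(T_\ell'-T_{\ell-1}')}$, invoke~\eqref{defNl} to obtain $\norm{\J_\ell'}\leq\widetilde{\gamma}\,\e{-\mu T_\ell'}/2^\ell$, and then plug into~\eqref{ineq:parallel-error-var} so that the geometric series collapses. Your write-up is, if anything, slightly more explicit about the index shift (using $B_n^\ell=0$ for $n\leq\ell$) and the telescoping identity, but there is no substantive difference.
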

Note that $B_n^\ell=0$ for $n\leq \ell$, so that $\sum_{n=\ell
}^{N-1} \e{\mu n\Delta T}B_n^{\ell}=0$ if $N_\ell = \ell + 1$. Hence, $N_\ell \geq \ell +1$. 
\begin{proof}
Using successively~Proposition~\ref{stability}, ~\eqref{capJvar}, Corrollary~\ref{parareal-bounded-jumps} and \eqref{defNl}, we find that:
\begin{align*}
\|{\J_\ell}'\|\leq &\gamma \sum_{n=1}^{N_\ell-1} \e{-\mu(N_\ell-n)\Delta T}\|J_{\ell,n}\| \\
\leq & 2\gamma\sum_{n=1}^{N_\ell-1} \e{-\mu(N_\ell-n)\Delta T}B^\ell_n =  2\gamma\sum_{n=1}^{N_\ell-1} \e{\mu n\Delta T}B^\ell_n \e{-\mu (T_\ell'-T_{\ell-1}')} \\
\leq &  \widetilde{\gamma} \dfrac{\e{-\mu T_{\ell}'}}{2^{\ell}}.
\end{align*}
Combining this last equation with~\eqref{ineq:parallel-error-var}, we get:
\begin{align*}
\norm{\varepsilon_{\parallel}(T'_\ell)}
	\leq & \gamma \left( \norm{ x(0)-\hat{x}(0)} +\sum_{j=1}^{\ell} \e{\mu  T_j'}\widetilde{\gamma} \dfrac{\e{-\mu T_{j}'}}{2^{j}}\right)\e{-\mu T_\ell'} \\
	\leq & \gamma \left( \norm{ x(0)-\hat{x}(0)} +\widetilde{\gamma}\sum_{j=1}^{\ell}  \dfrac{1}{2^{j}}\right)\e{-\mu T_\ell'} \\
	\leq & \gamma \left( \norm{ x(0)-\hat{x}(0)} +\widetilde{\gamma}\right)\e{-\mu T_\ell'}
\end{align*}
The result then follows from $\varepsilon_{\parallel}(0):=x(0) -\hat{x}_\parallel(0)=x(0) -\hat{x}(0)$.
\end{proof}

\section{Numerical experiments}\label{sec:numerics}
The present section is devoted to some numerical experiments for the Luenberger observer.   
For this purpose, we use
\begin{equation*}
A = \begin{bmatrix}0 & 1 \\ -1 & -2\end{bmatrix},\; 
B = \begin{pmatrix} 0 \\ 1 \end{pmatrix},\; 
C = \begin{pmatrix} 0 \\ 1 \end{pmatrix},\; 
v(t) = 3 +0.5\sin(0.75t).
\end{equation*}
We remark that the initial condition on System \eqref{det-assim} is unknown, but we perform the experiments with $x(0) = (0\FK{,} \; 0)^{\t}$. We then construct the observer $\hat{x}(t)$ by setting as initial condition $\hat{x}(0) = (2\FK{,} \; 1)^{\t}$ and the eigenvalues of $A-LC$. For the latter, we consider $\{-0.25,-0.5\}$ and $\{-2,-4\}$ as possible choices.

To introduce the parareal procedure, given $N$ available processors, we set
\[
T = 1,\; \delta T = \Delta T = \dfrac{T}{N},\; \textrm{Tol} = 10^{-8},
\]
where $\delta T$ denotes the time step associated with $\G$, chosen as a one step solver for the sake of simplicity. We use the Backward Euler method to define both propagators $\F$ and $\G$.

\subsection{Diagonalized system} 
We recall that the essential part of Theorem \ref{parareal-convergence} is the  \FK{contraction} 
factor $\eta$. For the Luenberger observer \eqref{Luenberger}, we have 
\begin{equation*}
\eta = \max\left\{ 
	\norm{[I -\delta t (A-LC)]^{-\sfrac{\Delta T}{\delta t}}},
	\norm{[I - \Delta T(A-LC)]^{-1}}	\right\}.
\end{equation*}
where $\delta t$ is the time step associated with $\F$, assumed to be constant. Even if we choose the eigenvalues of $A-LC$ to guarantee a decaying rate of convergence, $\eta$ is not necessarily smaller than one. For this reason, we consider instead a diagonalized observer 
\begin{equation}\label{Luenberger-diag}
\left\{
\begin{aligned}
	\dot{\hat{z}}(t) =&  D\hat{z}(t) +V^{-1}(Bu(t) +Ly(t)) \\
	\hat{z}(0) =&  V^{-1}\hat{x}_0 \\
\end{aligned}\right.
\end{equation}
where $\hat{z} = V^{-1}\hat{x}$ and $D = V^{-1}(A-LC)V$. 

Due to the change of variables, $\gamma = 1$. We determine the constants $\alpha$, $\beta$ and $\eta$ by
\begin{proposition}\label{BWDEuler} 
Let $\F$ and $\G$ be defined by the Backward Euler scheme, with time steps $\delta t$ and $\delta T$, respectively. We assume that $\Delta T K \leq 1$ and~\eqref{Luenberger-diag} satisfies
\begin{align*}
	M & \vcentcolon = \sup_{(\hat{z},t)}\norm{D\hat{z} +V^{-1}(Bu(t)+Ly(t))} < \infty \\
	K & \vcentcolon = \max\left\{\norm{D},\sup_{t>0}\norm{V^{-1}(B\dot{u}(t) +L\dot{y}(t))}\right\} < \infty .
\end{align*}
Then, the constants associated with both propagators in Theorem \ref{parareal-convergence} are given by
\begin{align}
	\alpha  	=&  \Delta T^2\left(\dfrac{K(M+1)}{2(1-\Delta T K)}\right), \nonumber \\
	\beta	=&  \norm{[I -\delta tD]^{-\sfrac{\Delta T}{\delta t}} -[I -\Delta T D]^{-1}}, \label{BE-beta} \\
\eta =&  \max\left\{ 
	\norm{[I -\delta t D]^{-\sfrac{\Delta T}{\delta t}}},
	\norm{[I - \Delta T D]^{-1}}	\right\} \label{BE-epsilon}.
\end{align}
\end{proposition}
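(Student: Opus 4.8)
The plan is to exploit the fact that the Backward Euler scheme applied to the linear, non-autonomous system~\eqref{Luenberger-diag} produces an \emph{affine} map of the initial condition, which delivers $\eta$ and $\beta$ almost immediately, and to treat $\alpha$ separately as a one-step local truncation error. First I would make the affine structure explicit: writing $g(t):=V^{-1}(Bu(t)+Ly(t))$ so that~\eqref{Luenberger-diag} reads $\dot{\hat z}=D\hat z+g$, a single Backward Euler step of length $h$ is $\hat z_{+}=(I-hD)^{-1}\hat z+h(I-hD)^{-1}g(\cdot)$. Composing $\Delta T/\delta t$ fine steps and one coarse step respectively gives
\begin{align*}
\F(t_{n},t_{n-1},z) &= (I-\delta t D)^{-\sfrac{\Delta T}{\delta t}} z + q_{\F}, \\
\G(t_{n},t_{n-1},z) &= (I-\Delta T D)^{-1} z + q_{\G},
\end{align*}
where $q_{\F},q_{\G}$ collect the forcing contributions and are independent of $z$. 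Since the maps are affine with these linear parts, the Lipschitz bound~\eqref{condepsi} holds with $\eta$ equal to the larger of the two operator norms, giving~\eqref{BE-epsilon}; and because the forcing terms cancel in $\tau(t_{n},y)-\tau(t_{n},z)=\big[(I-\delta t D)^{-\sfrac{\Delta T}{\delta t}}-(I-\Delta T D)^{-1}\big](y-z)$, condition~\eqref{condbeta} holds with $\beta$ as in~\eqref{BE-beta}.

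The remaining and main task is the bound $\norm{\tau(t_{n},z)}\le\alpha$ of~\eqref{condalpha}. Following the convention of Theorem~\ref{parareal-convergence}, I treat $\F$ as the exact flow, so that $\tau(t_{n},z)=\hat z(t_{n})-\G(t_{n},t_{n-1},z)$ is precisely the local error of one Backward Euler step. Taylor-expanding the exact trajectory about $t_{n}$ as $\hat z(t_{n-1})=\hat z(t_{n})-\Delta T\,\dot{\hat z}(t_{n})+\tfrac{\Delta T^2}{2}\ddot{\hat z}(\xi)$ and inserting $\dot{\hat z}=D\hat z+g$ shows that $\hat z(t_{n})$ satisfies the defining Backward Euler relation up to the defect $\tfrac{\Delta T^2}{2}\ddot{\hat z}(\xi)$; subtracting the relation defining $\G$ then yields $\tau(t_{n},z)=-\tfrac{\Delta T^2}{2}(I-\Delta T D)^{-1}\ddot{\hat z}(\xi)$.

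It then remains to estimate the two factors. Differentiating the ODE once more gives $\ddot{\hat z}=D\dot{\hat z}+\dot g$, whence $\norm{\ddot{\hat z}}\le\norm{D}\,\norm{\dot{\hat z}}+\norm{\dot g}\le K(M+1)$ by the definitions of $M$ and $K$. For the resolvent I would use $\norm{D}\le K$ together with the stepsize restriction $\Delta T K\le 1$ to bound $\norm{(I-\Delta T D)^{-1}}\le(1-\Delta T K)^{-1}$ by a Neumann series argument. Combining these gives $\alpha=\tfrac{\Delta T^2}{2}\cdot\tfrac{K(M+1)}{1-\Delta T K}$, as claimed. I expect the delicate points to be the bookkeeping that isolates $\tau$ as the pure coarse one-step error (reconciling the idealization $\F=$ exact used for $\alpha$ with the discrete fine propagator whose amplification defines $\eta$ and $\beta$), and controlling $\norm{(I-\Delta T D)^{-1}}$ near the boundary $\Delta T K=1$ of the admissible stepsize range.
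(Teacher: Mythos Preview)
Your proposal is correct and follows essentially the same route as the paper's proof in Appendix~\ref{appA}: the affine structure of Backward Euler immediately yields $\eta$ and $\beta$ from the linear parts, and for $\alpha$ the paper likewise treats $\F$ as the exact flow, Taylor-expands about $t_n$, rearranges the implicit relation to extract the factor $(I-\Delta T D)^{-1}$, and bounds $\|\ddot{\hat z}\|\le K(M+1)$ together with the Neumann-series resolvent estimate. You have also correctly flagged the inconsistency between the idealization $\F=$ exact used for $\alpha$ and the discrete fine propagator whose amplification appears in $\eta$ and $\beta$; the paper makes the same silent switch.
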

The proof is standard, but for the sake of completeness is presented in Appendix~\ref{appA}. 

\subsection{Evolution of $k_{\ell}$}
As a first experiment, since the jumps involved in~\eqref{stop-crit} allows us to compute the sequence $k^{obs} := \{k_{\ell}^{obs}\}_{\ell}$, we propose to compare its behavior with its \emph{a priori} estimate 
\[k^{th} := \{k_{\ell}\}_{\ell},\]
where the latter sequence is provided by Theorem~\ref{efficiency-theorem}. \\

We observe in Figure \ref{Exp1} that increasing $\widetilde{\gamma}$ leads to enlarge the number of windows in which the algorithm requires only 1 iteration. This is expected, due to the term $\widetilde{\gamma}\e{-\mu \ell T}$ present in Proposition~\ref{bound-jumps}. 

\begin{figure}
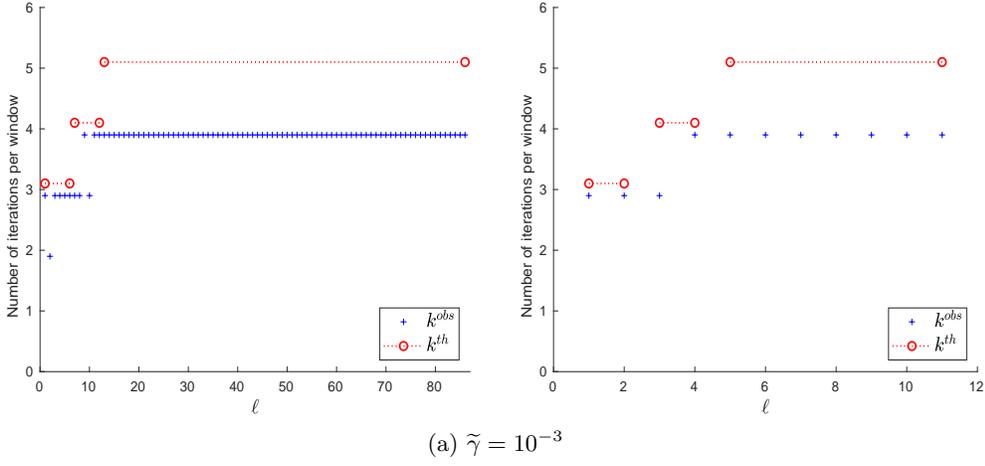
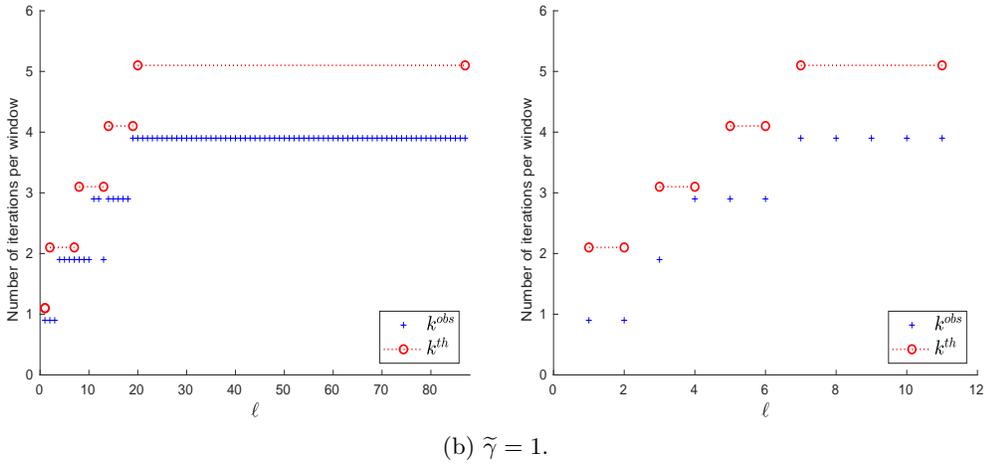
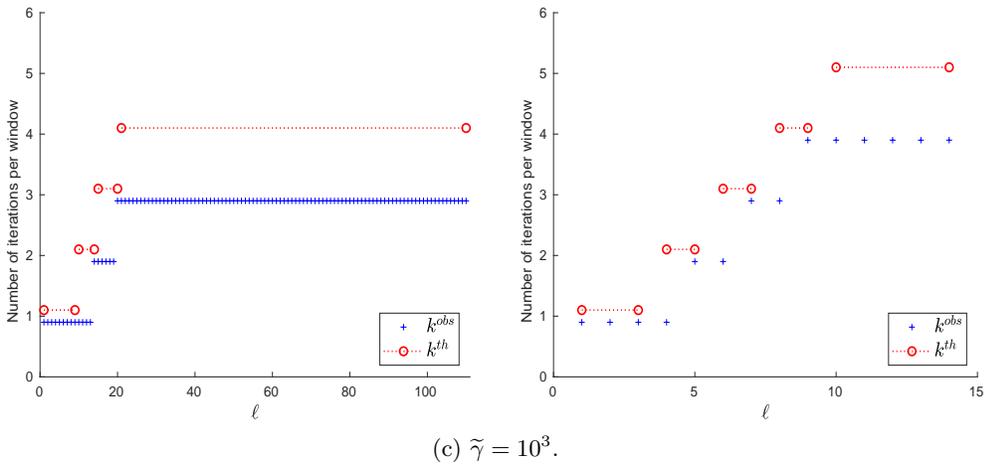

	\begin{subfigure}{\textwidth}
	\includegraphics[height=5.5cm,width=0.475\textwidth]{figs_diamond/EXP1-gamma=1e-03-mu=3e-01}
	\hfill
	\includegraphics[height=5.5cm,width=0.475\textwidth]{figs_diamond/EXP1-gamma=1e-03-mu=2e+00}
  	\caption{$\widetilde{\gamma} = 10^{-3}$}
  	\label{exp1a}
	\end{subfigure}	
	\vskip\baselineskip
	\begin{subfigure}{\textwidth}
	\includegraphics[height=5.5cm,width=0.475\textwidth]{figs_diamond/EXP1-gamma=1e+00-mu=3e-01}
	\hfill
	\includegraphics[height=5.5cm,width=0.475\textwidth]{figs_diamond/EXP1-gamma=1e+00-mu=2e+00}
  	\caption{$\widetilde{\gamma} = 1$.}
  	\label{exp1b}
	\end{subfigure}
	\vskip\baselineskip
	\begin{subfigure}{\textwidth}
	\includegraphics[height=5.5cm,width=0.475\textwidth]{figs_diamond/EXP1-gamma=1e+03-mu=3e-01}
	\hfill
	\includegraphics[height=5.5cm,width=0.475\textwidth]{figs_diamond/EXP1-gamma=1e+03-mu=2e+00}
  	\caption{$\widetilde{\gamma} = 10^{3}$.}
  	\label{exp1c}
	\end{subfigure}
\caption[Comparison between $k^{th}$ and $k^{obs}$]{Comparison between $k^{th}$ and $k^{obs}$, for $N = 16$ and $\delta t = \frac{\Delta T}{2^5}$. The eigenvalues of $A-LC$ are $\{-0.25,-0.5\}$ (left) and $\{-2,-4\}$ (right).}
\label{Exp1}
\end{figure}


\subsection{Observed efficiency}
Our second experiment consists \FK{of} comparing the observed efficiencies for both sequences $k^{obs}$ and $k^{th}$, using different values of $\widetilde{\gamma}$, $N$ and $\delta t$. To evaluate $E^{obs}$, the execution time for the parallel and sequential solvers was computed with the functions \texttt{tic} and \texttt{toc} of \texttt{MATLAB} (version 9.4.0.813654 (R2018a)).

As we notice previously, increasing $\widetilde{\gamma}$ improves the algorithm performance, but the difference between $E^{obs}(k^{obs})$ and $E^{obs}(k^{th})$ still remains, as observed in Figure \ref{exp2-1}. 
In Figure \ref{exp2-3}, the gap between these values varies slightly, showing that $\delta t$ small enough does not affect the efficiency significantly. 
Increasing the number of processors $N$ makes this difference smaller and also improves the efficiency of the algorithm, as shown in Figure \ref{exp2-2}. Another way to narrow this gap is choosing smaller eigenvalues for $A-LC$. As Figure \ref{Exp2} suggests, the comparison between $\{-0.25,-0.5\}$ and $\{-2,-4\}$ shows that $E^{obs}(k^{th})$ increases, whereas $E^{obs}(k^{obs})$ becomes smaller.

Figure \ref{Exp2} also shows that the observed efficiencies satisfy 
\[E^{obs}(k^{th}) \leq E^{obs}(k^{obs}), \]
which is simply because the sequence $k^{th}$ underperforms $k^{obs}$. 


Finally, we recall that $k^{th}$ is useful for estimating the efficiency. Assuming that $\tau_{\Delta T}^{\G}$ is negligible, we denote this estimate by 
\begin{equation*}
	E_{0}^{th} 
	= \ell_{\parallel}^{\rm Tol}\left(\sum_{\ell = 1}^{\ell_{\parallel}^{\rm Tol}} k_{\ell}\right)^{-1}.
\end{equation*}
with $\ell_{\parallel}^{\rm Tol}$ given by Theorem \ref{efficiency-theorem}.  We note that this value predicts quite well $E^{obs}(k^{th})$ in all the tests. 

\begin{figure}
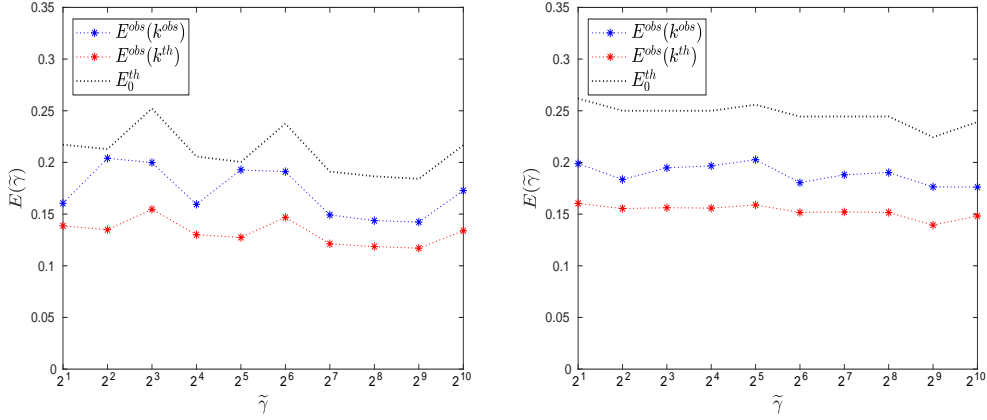
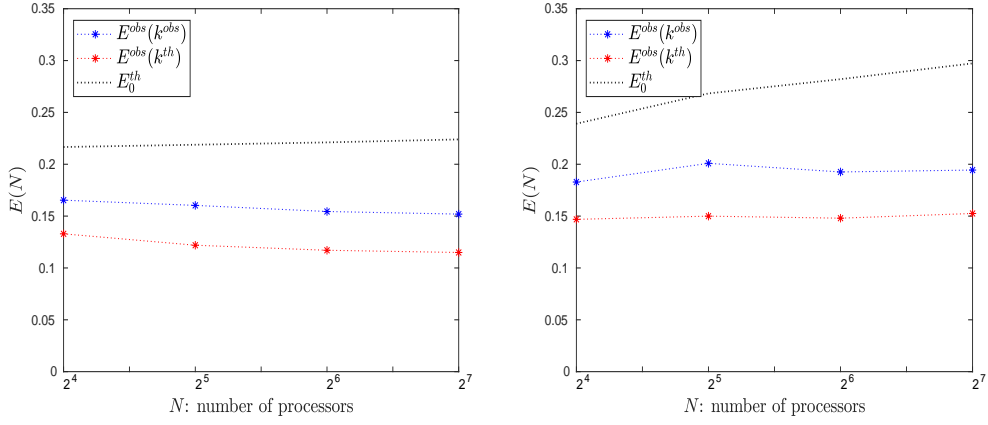
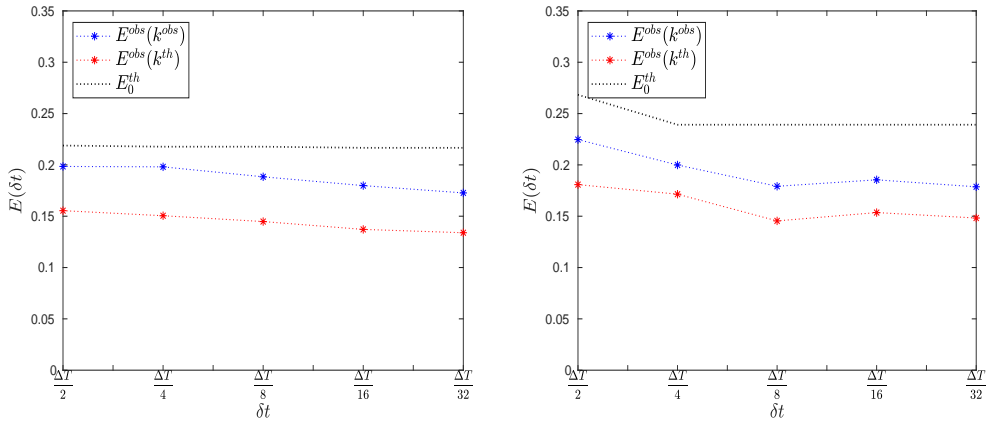

	\begin{subfigure}{\textwidth}
	\includegraphics[height=5.5cm,width=0.475\linewidth]{figs_diamond/EXP2-gamma2-mu=3e-01}
	\hfill
	\includegraphics[height=5.5cm,width=0.475\linewidth]{figs_diamond/EXP2-gamma2-mu=2e+00}
  	\caption{$E(\widetilde{\gamma})$, for $N = 16$ and $\delta t = \frac{\Delta T}{2^5}$.}
  	\label{exp2-1}
	\end{subfigure}	
	\vskip\baselineskip
	\begin{subfigure}{\textwidth}
	\includegraphics[height=5.5cm,width=0.475\linewidth]{figs_diamond/EXP3-N-mu=3e-01}
	\hfill
	\includegraphics[height=5.5cm,width=0.475\linewidth]{figs_diamond/EXP3-N-mu=2e+00}
  	\caption{$E(N)$, for $\delta t = \frac{\Delta T}{2^5}$ and $\widetilde{\gamma} = 2^{10}$.}
  	\label{exp2-2}
	\end{subfigure}
	\vskip\baselineskip
	\begin{subfigure}{\textwidth}
	\includegraphics[height=5.5cm,width=0.475\linewidth]{figs_diamond/EXP4-nfine-mu=3e-01}
	\hfill
	\includegraphics[height=5.5cm,width=0.475\linewidth]{figs_diamond/EXP4-nfine-mu=2e+00}
  	\caption{$E(\delta t)$, for $N = 16$ and $\widetilde{\gamma} = 2^{10}$.}
  	\label{exp2-3}
	\end{subfigure}
\caption[Comparison between $E^{obs}(k^{obs})$, $E^{obs}(k^{th})$ and $B_0^{th}$]{Comparison between $E^{obs}(k^{obs})$, $E^{obs}(k^{th})$ and $E_0^{th}$. The eigenvalues of $A-LC$ are $\{-0.25,-0.5\}$ (left) and $\{-2,-4\}$ (right).}
\label{Exp2}
\end{figure}

\subsection{Variable window approach} 
In the following, we set $M=100$, $\Delta T = 1/16$ and $\{-0.8,-1\}$ as eigenvalues of $A-LC$. 

The Diamond strategy and the Variable window approach are different in nature, but we can compare them by considering the number of parareal iterations as a function of time.
Denoting by $k_{vw}$ the sequence of parareal iterations associated with the latter, we observe in Figure \ref{VW-exp1} that starts performing better than $k^{th}$
, but in the long term underperforms the \textit{a priori} estimate.

Increasing $\widetilde{\gamma}$ leads to a slightly better performance of the Variable window approach, but the behavior previously described still remains. As a consequence of this, the observed efficiency of this procedure is smaller than $E^{obs}(k^{th})$, as shown in Figure \ref{VW-exp2-1}. 

In contrast to the previous subsection, when the observed efficiency depends on $\delta t$, we observe in Figure \ref{VW-exp2-2} a ``jump'' instead of a linear behaviour, due to a decrease in the total number of windows. This can be explained as a ``blindness'' to the tolerance: although the parallelized observer can be closer to the real solution at the end of a window, the Variable window approach does not take this into account and constructs the next one with more than enough subintervals. In principle, the Diamond Strategy faces the same problem, but it is solved using small windows.

\begin{figure}
	\begin{subfigure}{0.475\textwidth}
	\includegraphics[height=4.5cm, width=\linewidth]{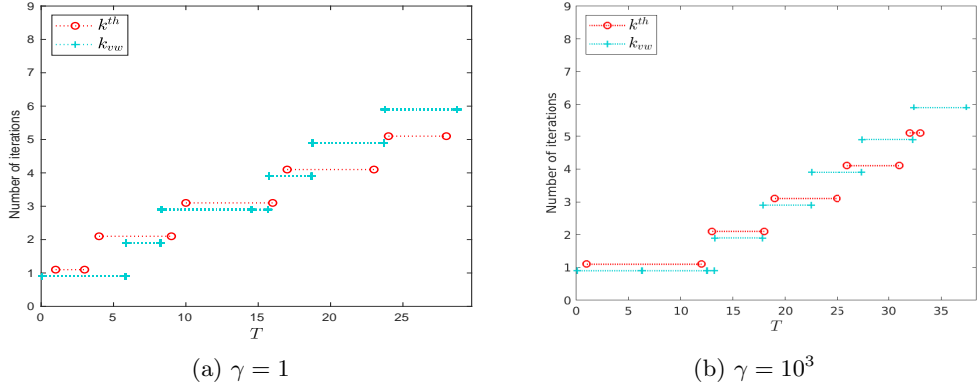}	
	\caption{$\gamma = 1$}
	\label{VW-exp1-1}
	\end{subfigure}
	\hfill
	\begin{subfigure}{0.475\textwidth}
	\adjincludegraphics[height=4.5cm, width=\linewidth, trim={ {.025\width} 0  {.075\width} {.05\width}}, clip,]{VW-EXP1-gamma=1e3-left.eps}
	\caption{$\gamma = 10^3$}
	\label{VW-exp1-2}	
	\end{subfigure}
\caption{Assimilation for the Diamond strategy and Variable Window approach.}
\label{VW-exp1}
\end{figure}

\begin{figure}
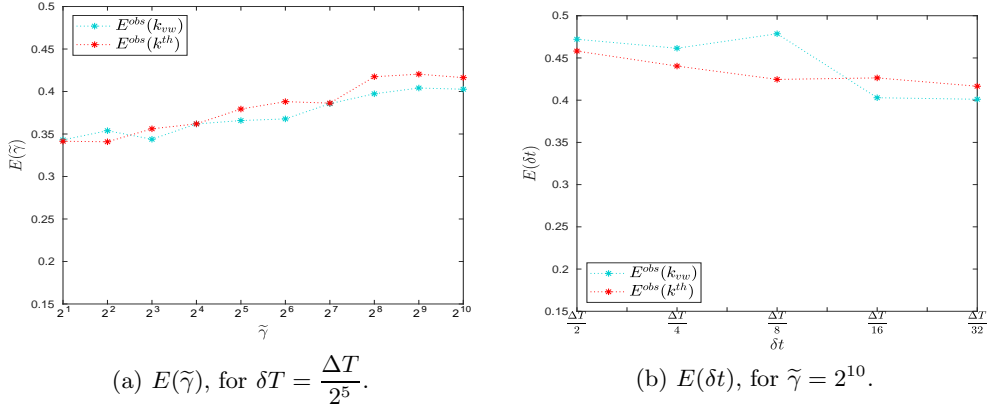

	\begin{subfigure}{0.475\textwidth}
	\includegraphics[height=4.5cm,width=\linewidth]{VW-EXP2-gamma2-left.eps}
	\caption{$E(\widetilde{\gamma})$, for $\delta T = \dfrac{\Delta T}{2^5}$.}
  	\label{VW-exp2-1}
	\end{subfigure}
	\hfill
	\begin{subfigure}{0.475\textwidth}
	\includegraphics[height=4.5cm,width=\linewidth]{VW-EXP3-nfine-left.eps}
	\caption{$E(\delta t)$, for $\widetilde{\gamma} = 2^{10}$.}
  	\label{VW-exp2-2}
	\end{subfigure}
\caption{Comparison between the efficiencies $E^{obs}(k^{th})$ (Diamond strategy) and $E^{obs}(k_{vw})$ (Variable Window approach).}
\label{VW-exp2}	
\end{figure}



\section*{Acknowledgments}
This work was supported by ANR Cin\'
e-Para (ANR-15-CE23-0019) and ANR/RGC
ALLOWAP (ANR-19-CE46-0013/A-HKBU203/19), by Swiss National Science Foundation grant
200020 178752, and by Hong Kong Research Grants Council (ECS 22300115 and GRF 12301817).


\bibliographystyle{abbrv}			
\bibliography{parareal_bib}


\appendix
\section{Proof of Proposition \ref{BWDEuler}}\label{appA}
Let $\{t_n\}_{n=0}^N$ be a regular partition of the interval $[0,T]$, with $\Delta T = T/N$. Given $\hat{z}_{n-1}$ an approximation of $\hat{z}(t_{n-1})$, we recall that the Backward Euler method applied to \eqref{Luenberger-diag} is given by 
\begin{equation*}
	\dfrac{\hat{z}_{n} -\hat{z}_{n-1}}{\Delta T} = f(\hat{z}_{n},t_{n}) 
\end{equation*}
where $f(s,t) = Ds +V^{-1}(Bv(t) +Ly(t))$. \\

Since $\delta t$ is assumed to be constant, we then define $\F$ by 
\begin{equation*}
	\F(t_{n},t_{n-1},\hat{z}_{n-1}) 
	= (I -\delta t D)^{-\sfrac{\Delta T}{\delta t}} \left[\delta t V^{-1}(Bv(t_{n-1}) +Lz(t_{n-1})) +\hat{z}_{n-1}\right]
\end{equation*}
and then, a direct calculation yields 
\begin{equation}\label{eq:F}
	\F(t_{n},t_{n-1},w) -\F(t_{n},t_{n-1},z) = (I -\delta t D)^{-\sfrac{\Delta T}{\delta t}}(w-z). 
\end{equation}
On the other hand, $\G$ is defined as a one-step solver, which allows us to replace $\delta t$ by $\Delta T$ in the previous expressions to obtain
\begin{equation}\label{eq:G}
	\G(t_{n},t_{n-1},y) -\G(t_{n},t_{n-1},z) = (I -\Delta t D)^{-1}(y-z). 
\end{equation}
Hence, Definitions \eqref{BE-beta} and \eqref{BE-epsilon} of $\beta$ and $\eta$ follow from combining \eqref{eq:F} and \eqref{eq:G}.\\

To bound the local truncation error, we proceed as follows. Starting at the exact solution $z_{n-1} = \hat{z}(t_{n-1})$, we
define $z_n = \G(t_{n},t_{n-1},z_{n-1})$ and then
\begin{align*}
\tau(t_{n},z_{n-1}) 
	=&  \F(t_{n},t_{n-1},z_{n-1}) - \G(t_{n},t_{n-1},z_{n-1}) \\
	=&  \hat{z}(t_{n}) -z_{n}
\end{align*}
since $\F$ is an exact solver. We use that $z_{n} = \hat{z}(t_{n-1}) +\Delta T f(z_{n},t_{n})$ and then expand $\hat{z}(t_{n-1})$ around $t_{n}$ to get
\begin{equation}\label{LTE}
\tau(t_{n},z_{n-1}) 
	= \Delta T\left(\dot{\hat{z}}(\hat{z}(t_{n}),t_{n})-f(z_{n},t_{n})\right) -\dfrac{(\Delta T)^2}{2}\ddot{\hat{z}}(\hat{z}(\xi),\xi)
\end{equation}
where $\xi\in(t_{n-1},t_{n})$. Since $\dot{\hat{z}} = f(z,t)$, we can get rid of the derivatives of $z$. In particular, the definition of $f(s,t)$ shows that 
\begin{align*}
\dot{\hat{z}}(\hat{z}(t_{n}),t_{n})-f(z_{n},t_{n})
	=&  f(\hat{z}(t_{n}),t_{n}) -f(z_{n},t_{n})
	= D \tau(t_n,z_{n-1}), \\
\ddot{\hat{z}}(\hat{z}(\xi),\xi)
	=&  \dfrac{df}{dt}(\hat{z}(\xi),\xi) 
	= \dfrac{\partial f}{\partial s}(\hat{z}(\xi),\xi) \cdot f(\hat{z}(\xi),\xi) + \dfrac{\partial f}{\partial t}(\hat{z}(\xi),\xi) \\
	=&  D f(\hat{z}(\xi),\xi) + V^{-1}(B\dot{v}(\xi) +L\dot{y}(\xi)). 
\end{align*}
Replacing these expressions in \eqref{LTE} and rearranging terms yields
\[
\tau(t_{n},z_{n-1}) 
	= -\dfrac{(\Delta T)^2}{2}(I-\Delta T D)^{-1}\big[D f(\hat{x}(\xi),\xi) + V^{-1}(B\dot{v}(\xi) +L\dot{y}(\xi))\Big].
\]
Finally, assuming that $\Delta TK < 1$, we take norms and use the definitions of $K$ and $M$ to obtain $\alpha$.

\end{document}